\documentclass[10pt,a4paper,reqno]{article}
\usepackage{graphicx} %,bbold,bbm,mathbbol,
\usepackage[english]{babel} %for biblatex
\usepackage{csquotes} %for biblatex
\usepackage{filecontents} %for biblatex
\usepackage[dvipsnames]{xcolor}
\usepackage{color}
\usepackage[colorinlistoftodos]{todonotes}
\usepackage{tensor}
\usepackage{thmtools}
\usepackage{microtype}

\usepackage{imakeidx}
\makeindex %for index in the document

\usepackage[
backend=biber,
hyperref=true,
backref=true,
isbn=false,
doi=true,
url=false,
natbib=true,
eprint=true,
useprefix=true,
maxcitenames=99,
maxbibnames=99,  
maxalphanames=99, 
minalphanames=99,
safeinputenc,
style=alphabetic,
citestyle=alphabetic,
block=space,
datamodel=ext-eprint,
]{biblatex}
 \usepackage[
 hypertexnames = false,
 colorlinks    = true,
 citecolor     = teal,
 linkcolor     = blue,
 urlcolor      = blue,
 linktocpage = true,
 hyperfootnotes=true,
 breaklinks
 ]{hyperref}

\renewbibmacro{in:}{} %%% for removing In : in biblatex %%%%%

\DeclareSourcemap{
	\maps[datatype=bibtex]{
		\map{
			\step[fieldsource=pmid, fieldtarget=pubmed]
		}
	}
}

\makeatletter
\DeclareFieldFormat{arxiv}{%
	arXiv\addcolon\space
	\ifhyperref
	{\href{http://arxiv.org/\abx@arxivpath/#1}{%
			\nolinkurl{#1}%
			\iffieldundef{arxivclass}
			{}
			{\addspace\texttt{\mkbibbrackets{\thefield{arxivclass}}}}}}
	{\nolinkurl{#1}
		\iffieldundef{arxivclass}
		{}
		{\addspace\texttt{\mkbibbrackets{\thefield{arxivclass}}}}}}
\makeatother
\DeclareFieldFormat{pmcid}{%
	PMCID\addcolon\space
	\ifhyperref
	{\href{http://www.ncbi.nlm.nih.gov/pmc/articles/#1}{\nolinkurl{#1}}}
	{\nolinkurl{#1}}}
\DeclareFieldFormat{mrnumber}{%
	MR\addcolon\space
	\ifhyperref
	{\href{http://www.ams.org/mathscinet-getitem?mr=MR#1}{\nolinkurl{#1}}}
	{\nolinkurl{#1}}}
\DeclareFieldFormat{zbl}{%
	Zbl\addcolon\space
	\ifhyperref
	{\href{http://zbmath.org/?q=an:#1}{\nolinkurl{#1}}}
	{\nolinkurl{#1}}}
\DeclareFieldAlias{jstor}{eprint:jstor}
\DeclareFieldAlias{hdl}{eprint:hdl}
\DeclareFieldAlias{pubmed}{eprint:pubmed}
\DeclareFieldAlias{googlebooks}{eprint:googlebooks}

\renewbibmacro*{eprint}{%
	\printfield{arxiv}%
	\newunit\newblock
	\printfield{jstor}%
	\newunit\newblock
	\printfield{mrnumber}%
	\newunit\newblock
	\printfield{zbl}%
	\newunit\newblock
	\printfield{hdl}%
	\newunit\newblock
	\printfield{pubmed}%
	\newunit\newblock
	\printfield{pmcid}%
	\newunit\newblock
	\printfield{googlebooks}%
	\newunit\newblock
	\iffieldundef{eprinttype}
	{\printfield{eprint}}
	{\printfield[eprint:\strfield{eprinttype}]{eprint}}}

\makeatletter
  \def\blfootnote{\xdef\@thefnmark{}\@footnotetext}
  \makeatother

\usepackage[margin=1.7cm,includehead]{geometry}
\usepackage{amsmath,stmaryrd, upgreek}
\usepackage{amsthm,amssymb}
\usepackage{latexsym}
\usepackage{amscd}
\usepackage{mathrsfs}
\usepackage{url}
\usepackage{mathtools}
\usepackage{doi}

\usepackage[T1]{fontenc}
\usepackage{tikz-cd}
\usepackage{titlesec}

\usepackage[titletoc,toc,title]{appendix}

\makeatletter %gets rid of Contents in toc
\renewcommand\tableofcontents{%
	\@starttoc{toc}%
	
}
\makeatother

\usepackage{enumerate}
\usepackage{slashed}
\graphicspath{}
\usepackage{fancyhdr} %gives heading and author name on alternate pages

\usepackage{changepage}

\newcounter{noteCounter}
\usepackage[nameinlink]{cleveref} %for using autoref, must be written after hyperref and amspackages

\newcommand\shorttitle{$dd^{\Phi}$-lemma and cohomologies for $\S7$-manifolds} %title which appear on alternate pages
\newcommand\authors{S. Dwivedi and R. Singhal} %author name appears on alternate pages

\newcounter{commentCounter}
\titleformat{\section}    
{\normalfont\large\bfseries\center}{\thesection.}{1em}{}
\makeatletter
\newcommand*{\rom}[1]{\expandafter\@slowromancap\romannumeral #1@}
\makeatother
\newtheorem{theorem}{Theorem}[section]
\newtheorem{corollary}[theorem]{Corollary}
\newtheorem{lemma}[theorem]{Lemma}
\newtheorem{proposition}[theorem]{Proposition}

\newtheorem{question}[theorem]{Question}
\newtheorem{claim}[theorem]{Claim}

\theoremstyle{definition}
\newtheorem{definition}[theorem]{Definition}
\newtheorem{remark}[theorem]{Remark}

\newtheorem*{ack}{Acknowledgments}
\numberwithin{equation}{section}
\def\bR{\mathbb R}

\DeclareMathOperator{\dd}{d}
\newcommand{\Spin}{\mathrm{Spin}}

\newcommand{\tr}{\mathrm{tr}}

\newcommand{\R}{\mathcal{R}}
\newcommand{\sff}{\mathfrak{spin}(7)}
\newcommand{\vol}{\mathrm{vol}}

\newcommand{\ddtwo}[2]{\mathsf d^{#1}_{#2}}          % component V_{#1} -> V_{#2}
\newcommand{\ddtwos}[2]{(\mathsf d^{#1}_{#2})^{\ast}} % its adjoint

\def\pt{\partial}
\def\del{\nabla}
\def\G2{\mathrm{G}_2}
\def\g2{\varphi}
\def\S7{\mathrm{Spin}(7)}
\def\s7{\Phi}

\def\ddphi{dd^{\Phi}}
\def\dphi{d^{\Phi}}

\def\cA{\mathcal{A}}

\def\cC{\mathcal{C}}

\def\cH{\mathcal{H}}

\def\cK{\mathcal{K}}
\def\cL{\mathcal{L}}
\def\cM{\mathcal{M}}

\def\Spin7{\mathrm{Spin(7)}}

\def\SO{\mathrm{SO}}
\def\GL{\mathrm{GL}}

\def\U{\mathrm{U}}

\def\dots7{\Dot{\Phi}}

\DeclareMathOperator{\Ima}{Im}

\newcommand\xqed[1]{%
	\leavevmode\unskip\penalty9999 \hbox{}\nobreak\hfill
	\quad\hbox{#1}}
\newcommand\demo{\xqed{$\blacktriangle$}}

\usepackage{cancel}
\usepackage[all]{xy}
\usepackage{multicol}

\usepackage{charter}

\usepackage[most]{tcolorbox}
\usepackage{tikz}
\usepackage[T1]{fontenc}            % Standard package for selecting font encodings

\newtcolorbox{mydefinition}{colback=blue!5!white,colframe=blue!75!black}

\newtcolorbox{mytheorem}{colback=green!5!white,colframe=green!75!black}

\usepackage{pgfplots}
\pgfplotsset{compat=1.11}

\usepackage{pdfpages}

\begin{document}

\title{A $dd^\Phi$-Lemma and Bott–Chern-type Cohomology for Spin(7)-Manifolds}    
\author{Shubham Dwivedi and Ragini Singhal}	

\date{}

\maketitle

\begin{abstract}
\noindent
We study the properties of the $\ddphi$-operator on $8$-dimensional $\S7$-manifolds with torsion-free $\S7$-structures $\Phi$. These operators were first introduced by Harvey and Lawson in \cite{HL-intropotential}. We prove a Hodge decomposition theorem for the $dd^\Phi$-operator and obtain an analogue of the $\pt \bar{\pt}$-lemma in Kähler geometry. Using this, we define Bott--Chern-type cohomologies for $\S7$-manifolds. We relate the Bott-Chern-type cohomology spaces to the moduli space of torsion-free Spin(7)-structures and calibrated geometry of Spin(7)-manifolds. These relations naturally give rise to the notion of Cayley-positive cones. In the course of proving the results, we state and prove various identities for the exterior derivative and its decompositions into irreducible $\S7$-representations as well as identities for second order derivatives and Laplacians. The identities we prove are for any Spin(7)-structures and the specialized torsion-free ones are Spin(7)-analogoues of Kähler identities and Bryant--Harvey's identities in the $\G2$-case \cite{bryant-someremarks} and are results of independent interest.

\end{abstract}

 \begin{adjustwidth}{0.95cm}{0.95cm}
    \tableofcontents
 \end{adjustwidth}
 %\listoftodos
% %\newpage

%\let\thefootnote\relax\footnotetext{\emph{MSC (2020): 53E99, 53C29, 53C21, 53C15.}}
\blfootnote{\emph{MSC (2020): 53E99, 53C29, 53C21, 53C15.}}

\section{Introduction}\label{sec:intro}

The main goal of this paper is to study properties of a second order differential operator introduced by Harvey--Lawson in \cite{HL-intropotential} for $8$-dimensional manifolds with a torsion-free Spin(7)-structure. A major motivation for this paper is the recent work of Pacini--Raffero \cite{pacini-raferro-pluripotential} where they prove results similar to those in this paper, for Calabi--Yau $6$-manifolds and $7$-dimensional manifolds with torsion-free $\G2$-structures.

\medskip

Let $M^n$ be a smooth manifold with a $G$-structure where $G\subset \GL(n, \bR)$. The action of $G$ induces a decomposition of all tensor bundles of $M$ which is governed by the representation theory of the group $G$. The space $\Omega^k(M),\ 0\leq k\leq n$ also decomposes further and as a result, the exterior derivative operator $d$ also decomposes. For example, on a complex manifold, the exterior derivative $d$ decomposes into operators $\partial$, $\bar\partial$. If the manifold has a metric which is compatible with the underlying complex structure then Hermitian Hodge theory provides Hodge decompositions for these operators and the corresponding second order differential operators. These results give rise to cohomology spaces and are one way to express them are in terms of the Dolbeault cohomology spaces $H^{p,q}(M)= \frac{\ker(\bar\partial)}{\Ima(\bar\partial)}$.

\medskip

When $G=\U(n)$ and the manifold has a K\"ahler structure, the splitting of differential forms and the decomposition of $d$ are fairly well-studied. For torsion-free $\G2$-manifolds with $G$ being the exceptional Lie group $\G2$, the decomposition of differential forms is well known and the decomposition of the exterior derivative $d$ was studied in detail by Bryant--Harvey \cite[Table 1]{bryant-someremarks}. The decompositions of the exterior derivative into irreducible components give rise to various second-order identities and relations between the Laplacian operator. This was also done by Bryant--Harvey \cite[Table 2 and 3]{bryant-someremarks}. The decompositions of the exterior derivative and the second-order operators for $\G2$-manifolds were also proved in \cite{chan-karigiannis-tsang}. 

\medskip

Since the decompositions of the exterior derivative for $\G2$-manifolds have been extremely useful in various problems, the first goal of the present paper is to study the same for $8$-dimensional manifolds with structure group Spin(7). Recall that a $8$-dimensional spin manifold $M^8$ admits a Spin(7)-structure if the structure group of its frame bundle $Fr(M)$ reduces from the Lie group $\GL(8, \bR)$ to the Lie group Spin(7) which is the double cover of $\SO(7, \bR)$. The differential geometric information of such a structure is encoded in a differential $4$-form $\Phi\in \Omega^4(M)$ which is \emph{admissible}. See \Cref{sec:prelims} for more details. 

\medskip

We denote by $(M^8, \Phi)$ a manifold which admits a Spin(7)-structure. The space of differential forms decomposes further as per irreducible Spin(7)-representations. Following Bryant \cite{bryant-someremarks}, we explicitly compute the decomposition of the exterior derivative into various irreducible components. We do this for \emph{any} Spin(7)-structure (not necessarily torsion-free) and the results are compiled in \Cref{table:dwithtorsion}. Since we will be only needing the decompositions for torsion-free Spin(7)-structures, that is, Spin(7)-structures with $d\Phi=0$, we also state the corresponding decomposition in \Cref{table:dtorsionfree}. Understanding these decompositions is a vital step towards our second main goal, which is to study the properties of Harvey--Lawson's $\ddphi$-operator. Given a manifold $M^8$ with a Spin(7)-structure $\Phi$, the $\ddphi$-operator is a second order operator and is defined as
\begin{align*}
\ddphi: \Omega^0(M)\rightarrow \Omega^4(M),\ \ddphi(f) = d(df^{\sharp}\lrcorner \Phi).
\end{align*}

\medskip

We obtain a Hodge type decomposition theorem for compact Spin(7)-manifolds $(M^8, \Phi)$ with $d\Phi=0$ using the operator $dd^\Phi$. More precisely, we prove the following theorem.

\medskip

\noindent
\textbf{\Cref{thm:Spin7Hodge}} Let $(M^8, \Phi)$ be a compact manifold with a torsion-free Spin(7)-structure $\Phi$. Then there exists an $L^2$-orthogonal decomposition of the space $K$ in \cref{eq:cohospace} as
\begin{align}\label{eq:Spin7Hodgeintro}
K=\Ima(\ddphi) \oplus \cH^4_1\oplus \cH^4_{35},
\end{align}
where $\cH^4_1$ and $\cH^4_{35}$ denote the space of harmonic forms in the spaces $\Omega^4_1$ and $\Omega^4_{35}$ respectively.

\medskip

These results are analogous to the corresponding decomposition theorem in the K\"ahler case and for $\G2$-manifolds. The latter were proved in a recent work of Pacini--Raffero \cite[Thm. 5.12]{pacini-raferro-pluripotential}. As mentioned above, a key step consists in comparing the $dd^\Phi$ operator with the corresponding decomposition of $d$ using \Cref{table:dtorsionfree}, in order to detect the correct subspace of $\Omega^4(M)$ within which to set up the Hodge decomposition.

Using \Cref{thm:Spin7Hodge}, we obtain natural analogue of the $\partial\bar\partial$-lemma in complex geometry for manifolds with torsion-free Spin(7)-structures. 

\medskip

\noindent
\textbf{\Cref{lem:Spin7delbarlemma}} Let $(M^8, \Phi)$ be a compact Spin(7)-manifold and hence $d\Phi=0$, and let $\gamma \in \Omega^4_1\oplus \Omega^4_{35}$. If $\gamma$ is globally $d$-exact, then $\gamma$ is globally $\ddphi$-exact, that is, $\gamma=\ddphi f$ for some $f\in C^{\infty}(M)$.

\medskip

In fact, the statement of the $\ddphi$-lemma in Spin(7)-geometry is precisely like the $\pt\bar{\pt}$-lemma in K\"ahler geometry. We use the Spin(7)-$\ddphi$-lemma to study cohomology of Spin(7)-manifolds and these are used to define analogues of the Bott--Chern cohomology in \Cref{def:bot-chernspin7}. More precisely, for a compact Spin(7)-manifold $(M^8, \Phi)$, we define the \textbf{Bott--Chern-type $\ddphi$-cohomology space} as
\begin{align}\label{eq:bot-cherncohintro}
H^{\Phi}(M) = \cfrac{\ker \left(\left.d\right|_{\Omega^4_{1\oplus 35}(M)}\right)}{\Ima(\ddphi)}.
\end{align}
We show in \Cref{lem:Spin7coh} that these $\ddphi$-Bott--Chern cohomology spaces are isomorphic to certain spaces of harmonic forms defined by the action of the Lie group Spin(7). 

\medskip

The paper is organized as follows. We discuss preliminaries on Spin(7)-geometry in \Cref{sec:prelims}. Harvey--Lawson's $\ddphi$-operator is described in \Cref{subsec:harveylawsonop}. Our first main result is described in \Cref{lem:ddecomp}, \Cref{table:dwithtorsion} which states the decomposition of the exterior derivative $d$ on various subspaces for arbitrary Spin(7)-structures. For the sake of completeness and ease of use throughout the paper, we re-state the table for torsion-free Spin(7)-structures in \Cref{table:dtorsionfree}. These are analogous to the torsion-free $\G2$-structures case in \cite[Table 1]{bryant-someremarks} and \cite[Figure 1]{chan-karigiannis-tsang}. This is followed by a table which contains new identities and relations between second order operators and the Laplacian on Spin(7)-manifolds. These are stated in \Cref{table:d2} and \Cref{table:d3}, respectively. We proceed to prove our other main results on $\ddphi$-lemma and the $\ddphi$-Bott--Chern cohomology in \Cref{sec:cohomology}. The $\ddphi$-lemma and the resulting $\ddphi$-cohomology naturally give rise to the notion of \emph{positive cones of a Spin(7)-structure}. We make this notion precise and explain how this is related to calibrated geometry of Spin(7)-manifolds. We discuss the notion of \textbf{Cayley-positive} $4$-forms in \Cref{def:cayleypositive} which allows us to define \textbf{Cayley-positive cone} of a torsion-free Spin(7)-structure $\Phi$ in \Cref{def:spin7cone}. We show that the space in \Cref{def:spin7cone} is indeed a pointed convex cone in \Cref{prop:conesprop} and use it to give a notion of Spin(7)-potentials in \Cref{lem:spin7potential}. These notions were first studied by Harvey--Lawson \cite{HL-intropotential, HL-positive}.

Another paper which studies cohomology of Spin(7)-manifolds is \cite{KLS} by Kawai--Lê--Schwachhöfer which uses the Frölicher-Nijenhuis bracket to define and study cohomology of Spin(7)-manifolds. Although in the torsion-free case, the $\ddphi$-operator and the Frölicher-Nijenhuis operator in \cite{KLS} agree on functions, there is no natural identification between these cohomologies.

\ack{The first author would like to thank the organizers of the AIM workshop "New directions in $\G2$-geometry", Jason Lotay and Tommaso Pacini, for the invitation to participate where the idea of this project was first formed. He thanks the participants for many useful discussions and the staffs of the institute for providing excellent working conditions. Both the authors would like to thank Tommaso Pacini for sharing his ideas about pluripotential theory on special holonomy manifolds and its applications. The first author acknowledges support by the Deutsche Forschungsgemeinschaft (DFG, German Research Foundation) under Germany’s Excellence Strategy – EXC 2121 "Quantum Universe" – 390833306. The second author is funded by the Deutsche Forschungsgemeinschaft (DFG, German Research Foundation) under Germany’s Excellence Strategy EXC 2044 – 390685587, Mathematics Münster: Dynamics–Geometry–Structure.}

\section{Preliminaries on Spin(7)-structures}\label{sec:prelims}
We discuss some preliminaries on Spin(7)-structures in this section. More details on Spin(7)-geometry can be found in \cite{joycebook} and \cite{karigiannis-spin7}. 

\medskip

Let $M^8$ be a smooth manifold. A Spin(7)-structure on $M$ is the reduction of the frame bundle of $M$ from the Lie group $\GL(8,\bR)$ to the Lie group Spin(7). Such a reduction is equivalent to the existence of a smooth $4$-form $\Phi\in \Omega^4(M)$ which is \emph{admissible}. An admissible $4$-form induces a Riemannian metric and an orientation and hence a Hodge star operator $*$.The form $\Phi$ is self-dual,
\begin{align*}
*\Phi=\Phi.
\end{align*}

\begin{definition}\label{def:tfSpin7str}
    Let $\del$ be the Levi-Civita connection of the metric $g$. The pair $(M^8, \s7)$ is a \emph{Spin(7)-manifold} if $\del \s7=0$. This is a non-linear partial differential equation for $\s7$, since $\del$ depends on $g$, which in turn depends non-linearly on $\s7$. A Spin(7)-manifold has Riemannian holonomy contained in the subgroup $\S7\subset \SO(8)$. Such a parallel Spin(7)-structure is also called \emph{torsion-free}. The condition $\del \Phi=0$ is equivalent to $d\Phi=0$. 
\end{definition}

\subsection{Decomposition of the space of forms}\label{subsec:formdecomp}

The existence of a Spin(7)-structure $\s7$ induces a decomposition of the space of differential forms on $M$  into irreducible Spin(7)-representations. We have the following orthogonal decompositions, with respect to $g$:
\begin{align*}
\Omega^2=\Omega^2_{7}\oplus \Omega^2_{21},\ \ \ \ \ \ \ \Omega^3=\Omega^3_{8}\oplus \Omega^3_{48}, \ \ \ \ \ \ \ \ \  \Omega^4=\Omega^4_{1}\oplus \Omega^4_{7}\oplus \Omega^4_{27}\oplus \Omega^4_{35},
\end{align*}
where $\Omega^k_l$ has point wise dimension $l$. Explicitly, $\Omega^2$ and $\Omega^3$ are described as follows:
\begin{align}\label{eq:Omega2decomp1}
\Omega^2_7=\{\beta \in \Omega^2 \mid *(\s7 \wedge \beta)=3\beta\}, \ \ \ \ \ \ \ \Omega^2_{21}&=\{ \beta\in \Omega^2 \mid *(\s7\wedge \beta)=-\beta\}, 
\end{align}
and
\begin{align}\label{eq:Omega3decomp1}
\Omega^3_8=\{ X\lrcorner \s7 \mid X\in \Gamma(TM)\},\ \ \ \ \ \ \ \ \ \ \ \ \Omega^3_{48}=\{ \gamma \in \Omega^3\mid \gamma \wedge \s7 =0\}. 
\end{align}
Using \cref{eq:Omega2decomp1}, we have for $\beta\in \Omega^2(M)$,
\begin{align}\label{eq:Omega2decomp2}
\beta_{ij}\in \Omega^2_7 \iff \beta^{ab}\s7_{abij}=6\beta_{ij},\ \ \ \ \text{and}\ \ \ \ 
\beta_{ij}\in \Omega^2_{21} \iff \beta^{ab}\s7_{abij}=-2\beta_{ij},
\end{align}
where we are using the underlying metric to raise the indices.

% \begin{remark}
% A convention different than ours is also prevalent in the literature. In this convention, $\Omega^2_7$ and $\Omega^2_{21}$ are the $+3$ and $-1$ eigenspaces of the map $\beta\mapsto *(\Phi\wedge \beta),$ respectively. As a result, the constants on the right hand sides of \cref{eq:27decomp2} and \cref{eq:221decomp2} are $+6$ and $-2$ respectively.  
% \end{remark}

\noindent
For $\gamma\in \Omega^3(M)$,
\begin{align}\label{eq:Omega3decomp2}
\gamma_{ijk}\in \Omega^3_8 \iff \gamma_{ijk}=X_l\s7_{ijkl}\ \textup{for\ some}\ X\in \Gamma(TM),\ \ \ \ \gamma_{ijk}\in \Omega^3_{48} \iff \gamma_{ijk}\s7_{ijkl}=0. 
\end{align}
If $\pi_7$ and $\pi_{21}$ are the projection operators on $\Omega^2$, it follows from \cref{eq:Omega2decomp2} that 
\begin{align}
\pi_7(\beta)_{ij}&=\frac 14\beta_{ij}+\frac 18\beta_{ab}\s7_{abij}, \label{eq:pi7}\\
\pi_{21}(\beta)_{ij}&=\frac 34\beta_{ij}-\frac 18\beta_{ab}\s7_{abij}. \label{eq:pi21}
\end{align}
We will be using these equations throughout the paper.
Finally, for $\beta_{ij}\in \Omega^2_{21}$,
\begin{align}
    \beta_{ab}\s7_{bpqr}&=\beta_{pi}\s7_{iqra}+\beta_{qi}\s7_{irpa}+\beta_{ri}\s7_{ipqa}, \label{eq:221prop}   
\end{align}
which can be used to show that $\Omega^2_{21}\equiv \mathfrak{spin}(7)$ is the Lie algebra of Spin(7), with the commutator of matrices
\begin{align*}
    [\alpha, \beta]_{ij}=\alpha_{il}\beta_{lj}-\alpha_{jl}\beta_{li}.
\end{align*}

\medskip

\noindent
To describe $\Omega^4$ in local orthonormal frame, we use the $\diamond$ operator (cf. \cite{dgk-isometric, dle-isometric, Spin7-flow_Dwivedi}). Given $A\in \Gamma(T^*M\otimes TM)$, define
\begin{align}
\label{eq:diadefn1}
    A\diamond \s7= \frac{1}{24}(A_{i}^p\s7_{pjkl}+A_{j}^p\s7_{ipkl}+A_{k}^p\s7_{ijpl}+A_{l}^p\s7_{ijkp})e^i\wedge e^j\wedge e^k\wedge e^l,    
\end{align}
and hence 
\begin{align}
\label{eq:diadefn2}
    (A\diamond \s7)_{ijkl}=  A_{i}^{\, p}\s7_{pjkl}+A_{j}^{\, p}\s7_{ipkl}+A_{k}^{\, p}\s7_{ijpl}+A_{l}^{\, p}\s7_{ijkp}. 
\end{align}
In general for any $k$-form $\omega$ the operator $\diamond \ \omega$ defines the infinitesimal $\mathrm{GL}(8)$-action on forms. So for $A\in \mathfrak{gl}(8,\bR)$ in an orthonormal frame $\{e_i,i=1,\ldots,8\}$ 
\begin{align}\label{eqn:gen_diamond}
    A \diamond \omega &= \sum_{i=1}^8 e^i\wedge (A(e_i)\lrcorner \omega).
\end{align}
Recall that $
    \Gamma(T^*M\otimes TM)=\Omega^0\oplus S_0\oplus \Omega^2$, where $S_0$ is the space of trace-free symmetric $2$-tensors and $\Omega^2$ further splits orthogonally into \cref{eq:Omega2decomp1}, so
\begin{align}
\label{eq:splitting TM* x TM}
 \Gamma(T^*M\otimes TM)=\Omega^0\oplus S_0 \oplus \Omega^2_7\oplus \Omega^2_{21}.   
\end{align}
With respect to this splitting, we can write $A=\frac 18 (\tr A)g+A_{35}+A_7+A_{21}$ where $A_{35}$ is a symmetric traceless $2$-tensor. 
The diamond contraction \cref{eq:diadefn2} defines a linear map $A\mapsto A\diamond \s7$, from $\Omega^0\oplus S_0 \oplus \Omega^2_7\oplus \Omega^2_{21}$ to $\Omega^4(M)$. The kernel of the map $A\mapsto A\diamond \s7$ is isomorphic to the subspace $\Omega^2_{21}$. The remaining three summands $\Omega^0,\ S_0$ and $\Omega^2_7$ are mapped isomorphically onto the subspaces $\Omega^4_1,\ \Omega^4_{35}$ and $\Omega^4_7$ respectively. We also note that 
\begin{align}\label{eq:sdasdforms}
\Omega^4_{1\oplus7\oplus 27}(M) = \{\gamma \in \Omega^4 \mid *\gamma = \gamma\} = \text{self-dual\ }4\text{-forms},\ \ \Omega^4_{35}(M)=\{\gamma \in \Omega^4 \mid *\gamma =- \gamma\} = \text{anti-self-dual\ }4\text{-forms}.
\end{align}

\medskip

\noindent
Before we discuss the torsion of a Spin(7)-structure, we note some contraction identities involving the $4$-form $\s7$. In local coordinates $\{x^1, \cdots, x^8\}$, the $4$-form $\s7$ is
\begin{align*}
\s7=\frac{1}{24}\s7_{ijkl}\ dx^i\wedge dx^j\wedge dx^k\wedge dx^l    
\end{align*}
where $\s7_{ijkl}$ is totally skew-symmetric. We have the following identities

\begin{align}
\s7_{ijkl}\s7_{ab}^{\, \, \, \, kl}&=6g_{ia}g_{jb}-6g_{ib}g_{ja}+4\s7_{ijab}, \label{eq:impiden2} \\
    \s7_{ijkl}\s7_{a}^{\, \, \, \, jkl}&=42g_{ia}, \label{eq:impiden3} \\
    \s7_{ijkl}\s7^{ijkl}&=336 . \label{eq:impiden4}
\end{align}
We also have contraction identities involving $\del \s7$ and $\s7$
\begin{align}
(\del_m\s7_{ijkl})\s7_{ab}^{\, \, \, kl}&=-\s7_{ij}^{\, \, \, \, kl}(\del_m\s7_{abkl})+4\del_m\s7_{ijab} \label{eq:impiden5}\\
(\del_m\s7_{ijkl})\s7_{a}^{\, \, \, jkl}&=-\s7_{i}^{\, \, \, jkl}(\del_m\s7_{ajkl}) \label{eq:impiden6} \\
(\del_m\s7_{ijkl})\s7^{ijkl}&=0. \label{eq:impiden7}
\end{align}

%%%%%%%%%%%%%%%%%%%%
%%%%%%%%%%%%%%%%%%%
\medspace

The \emph{intrinsic torsions} of a Spin(7)-structure are differential forms which lie $\Omega^5_8\oplus \Omega^5_{48}$,
\begin{align}\label{eq:intrinsictorsions}
d\Phi = T_8 \wedge \Phi + * T_{48}.
\end{align}
For torsion free Spin(7)-manifold, $d\Phi=0$ is closed (and thus, co-closed) which implies 
\begin{align*}
T_8=0, \qquad T_{48}=0.    
\end{align*}
The \emph{torsion} of a  Spin(7)-structure $\Phi$ is a $3$-form $T$ which is given by
\begin{align}\label{eq:torsion}
T = T_8\lrcorner\s7 + T_{48}.
\end{align}

\medskip

Let $W\in \Gamma(TM)$. We have the following expression for the Lie derivative of $\Phi$ in the direction of $W$ (cf. \cite[eq. (2.39)]{Spin7-flow_Dwivedi}),
\begin{align}
\cL_W\Phi&=\left(\left(\frac 12 \cL_Wg + W\lrcorner T +(\del W)_7 \right)  \diamond \s7 \right), \label{eq:liePhiexp} 
\end{align}
where $(\del W)_7$ means the $\pi_7$-component of the $2$-tensor $\del W$.

% \medspace

\subsection{Harvey--Lawson's Hessian operator}\label{subsec:harveylawsonop}

Let $(M^8, \Phi)$ be a manifold with a Spin(7)-strcuture $\Phi$. Following \cite{HL-intropotential}, the $\dphi$-operator and the $\ddphi$-operators are defined as 

\begin{align}\label{eq:dphiop}
\dphi: \Omega^0(M) \rightarrow \Omega^3(M),\ \dphi(f) = df\lrcorner \Phi,	
\end{align}
and 
\begin{align}\label{eq:ddphiop}
\ddphi: \Omega^0(M)\rightarrow \Omega^4(M),\ \ddphi(f) = d(df\lrcorner \Phi).
\end{align}
The operator $\dphi$ was also studied in \cite{Verbitsky}. Clearly, $\dphi(f)\in \Omega^3_8(M)$.
Let us  try to understand the operator using the $\diamond$-operator. Let $Y\in \Gamma(TM)$ and let $A=\del Y$ in \cref{eq:diadefn2}, then
 \begin{align*}
 (\del Y \diamond \Phi)_{ijkl}= (\del_iY^p)\Phi_{pjkl}+(\del_jY^p)\Phi_{ipkl}+(\del_kY^p)\Phi_{ijpl}+ (\del_lY^p)\Phi_{ijkp}.
 \end{align*}
Recall that 
\begin{align*}
(\cL_Y \Phi)_{ijkl}= Y^p\del_p\Phi_{ijk}+(\del_iY^p)\Phi_{pjkl}+(\del_jY^p)\s7_{ipkl}+(\del_kY^p)\s7_{ijpl}+(\del_lY^p)\s7_{ijkp},
\end{align*}
and hence
\begin{align*}
(\del Y \diamond \Phi)_{ijk} = -Y^p\del_p\Phi_{ijkl}+(\cL_Y\Phi)_{ijkl} = d(Y\lrcorner \Phi)_{ijkl}+ (Y\lrcorner d\Phi)_{ijkl} - (\del_Y \Phi)_{ijkl}.
\end{align*}
We can also use the fact that 
\begin{align*}
\cL_Y\Phi = \left(\frac 12 \cL_Y g  + Y\lrcorner T + (\del Y)_7\right)\diamond \Phi,
\end{align*}
implying that
\begin{align*}
(\del Y \diamond \Phi)_{ijkl} = -Y^p\del_p\Phi_{ijkl} + \left(\left(\frac 12 \cL_Y g  + Y\lrcorner T + (\del Y)_7\right)\diamond \Phi\right)_{ijkl},
\end{align*}

\medskip

We have two cases:

\medskip

\noindent
{\bf{1.}} Suppose $\Phi$ is a calibration, i.e., $d\Phi=0$ in which case the Spin(7)-structure is torsion-free, and $Y$ is a vector field then we have
\begin{align}
(\del Y\diamond \Phi) = d(Y\lrcorner \g2).
\end{align}
In the case when $Y=df$, this becomes
\begin{align}
(\del \del f\diamond \Phi) = d(df\lrcorner \Phi) = \ddphi f,
\end{align}
is precisely the Harvey--Lawson $\ddphi$-operator in \cref{eq:ddphiop}.

\medspace

\noindent
{\bf{2.}} When $\Phi$ is arbitrary then 
\begin{align}
(\del Y\diamond \g2) = d(Y\lrcorner \Phi)+Y\lrcorner d\Phi - \del_{Y}\Phi,
\end{align}
which in the case of $Y=\del f$ becomes
\begin{align}
(\del \del f \diamond \Phi)= d(df\lrcorner \Phi)+ df\lrcorner d\Phi - \del_{\del f}\Phi,
\end{align}
thus defining the Harvey--Lawson $\Phi$-Hessian operator as

\begin{align}\label{eq:hlhessiangeneral}
\cH^{\Phi}(f):\Omega^0(M) \rightarrow \Omega^4(M), \qquad \cH^{\Phi}(f) =  d(df\lrcorner \Phi)+ df\lrcorner d\Phi - \del_{\del f}\Phi.
\end{align}

These are the same relations as in \cite[\textsection 0]{HL-intropotential}, we just write them using the $\diamond$-operator.

\section{Decomposition of the exterior derivative}\label{sec:decompositionofd}

\subsection{First-order identities}

In this section, we use the decomposition of forms on $(M^8, \Phi)$ discussed in \Cref{subsec:formdecomp} to state and prove the decompositions of the extrior derivative $d$ into various components. We use the following notation. Denote by
\begin{align}
\dd^p_q:\Omega^k_p\hookrightarrow \Omega^k \xrightarrow{d} \Omega^{k+1}_q,
\end{align}
whenever this operation makes sense. The only term in the \Cref{table:dwithtorsion} which doesn't follow this convention is the $\dd^{48}_{21}$ term in the 12th row. That term is meant to denote the $\Omega^2_{21}$-component of the $2$-form $d^*\gamma_{48}, \gamma_{48}\in \Omega^3_{48}(M)$. The next theorem computes $\dd^{p}_q$ for all possible $p$ and $q$ in the Spin(7)-case for an arbitrary Spin(7)-structure. 

\begin{theorem}\label{lem:ddecomp}
    Let $M^8$ be a smooth manifold with a Spin(7)-structure $\Phi$. Let $d\Phi=T_8\wedge\Phi+*T_{48}$ be the intrinsic torsion forms. Then
for all $p, q \in \{1, 7, 8, 21, 27,35,48\}$ there exist a first-order differential operator $\dd^p_q \colon \Omega^k_p \to \Omega^{k+1}_q$ so that
the following exterior derivative formulae hold for all $f \in \Omega^0_1, \alpha \in \Omega^1_8, \beta=\beta_7+\beta_{21}\in \Omega^2_7\oplus\Omega^2_{21},\eta=\eta_{27}+\eta_{35}\in \Omega^4_{27}\oplus\Omega^4_{35},$ and $\gamma_{48} \in \Omega^3_{48}$:
\end{theorem}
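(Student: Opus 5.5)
Our plan is to compute $d$ of each type of form explicitly in a local orthonormal frame. We project the result onto the irreducible summands of \Cref{subsec:formdecomp} using the projection formulas \cref{eq:pi7}, \cref{eq:pi21}, the characterisations \cref{eq:Omega3decomp2}, and the $\diamond$-description of $\Omega^4$. Throughout, we write $d\omega$ as the skew-symmetrisation of $\del\omega$. Whenever $\Phi$ is differentiated, we replace $\del\Phi$ by the torsion, $\del_m\Phi_{ijkl}=(T_m\diamond\Phi)_{ijkl}$ with $T_m$ the $\Omega^2_7$-valued 1-form equivalent to \cref{eq:torsion}. We then split the torsion contributions into the $T_8$ and $T_{48}$ parts appearing in \cref{eq:intrinsictorsions}. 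Schur's lemma fixes which $\dd^p_q$ can be nonzero. A first-order $\mathrm{Spin}(7)$-equivariant operator $\Omega^k_p\to\Omega^{k+1}_q$ with leading symbol in $\mathrm{Hom}(V_8\otimes V_p,V_q)^{\S7}$ exists only if $V_q\subset V_8\otimes V_p$. So the table entries are determined up to constants and torsion terms, and the work is to find those constants.

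The key steps, in order, are as follows. \textbf{(1)} For $f\in\Omega^0$, $df\in\Omega^1_8$ is trivial. \textbf{(2)} For $\alpha\in\Omega^1_8$, we decompose $d\alpha$ by contracting $(d\alpha)_{ab}$ with $\Phi_{abij}$ and applying \cref{eq:pi7}, \cref{eq:pi21}. This defines $\dd^8_7$ and $\dd^8_{21}$, and no torsion appears. \textbf{(3)} For $\beta=\beta_7+\beta_{21}$, we compute $(d\beta)_{ijk}$. Its $\Omega^3_8$-part is read off from $(d\beta)_{ijk}\Phi_{ijkl}$ using \cref{eq:Omega2decomp2}. The eigenvalue relations $\beta^{ab}\Phi_{abij}=6\beta_{ij}$ or $-2\beta_{ij}$ are differentiated, so that $\del\beta$ contracted with $\Phi$ is expressed through $\del\beta$ plus torsion terms $\beta\ast\del\Phi$. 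This gives relations between $\dd^7_8$ and $d^*\beta_7$, and between $\dd^{21}_8$ and $d^*\beta_{21}$. The remaining part is by definition $\dd^p_{48}$. \textbf{(4)} For $3$-forms $X\lrcorner\Phi$ and $\gamma_{48}$, we use $d(X\lrcorner\Phi)=\del X\diamond\Phi-X\lrcorner d\Phi+\del_X\Phi$ from \Cref{subsec:harveylawsonop}. Splitting $\del X$ by \cref{eq:splitting TM* x TM} gives the components in $\Omega^4_1,\Omega^4_7,\Omega^4_{35}$ directly: $\mathrm{div}X$, $(\del X)_7$, and $(\cL_Xg)_0$. The $\Omega^4_{27}$-part comes only from torsion. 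For $\gamma_{48}$ we use Hodge duality and $d^*\gamma_{48}$, and this is where the $\dd^{48}_{21}$ convention enters. \textbf{(5)} For $4$-forms $\eta_{27}+\eta_{35}$, we use $*$ together with \cref{eq:sdasdforms}. Since $d\eta=\pm*d^*(*\eta)$, the $5$-form computation reduces to the $3$-form side already done, with sign changes between the self-dual and anti-self-dual parts.

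The main obstacle is bookkeeping the torsion terms, and in particular proving that the $\Omega^k_p$-components produced by $\del\Phi$ assemble into $T_8$ and $T_{48}$ terms with correct coefficients. For this we will rely on the contraction identities \cref{eq:impiden2}–\cref{eq:impiden7}, especially \cref{eq:impiden5} and \cref{eq:impiden6}, to move derivatives off $\Phi$. To pin down each constant, we will first verify the torsion-free case, where $d^2=0$ and $d\Phi=0$ give consistency checks such as $\dd^8_{7}\circ\dd^1_8$ and $\dd^8_{21}\circ\dd^1_8$ vanishing on functions. We then add torsion terms by linearity in $\del\Phi$. Where constants remain ambiguous, we fix them by testing on flat $\mathbb R^8$ with the standard $\Phi_0$ and linear or quadratic forms.
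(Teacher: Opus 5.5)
Your proposal is correct and follows essentially the same route as the paper. Both arguments use Schur's lemma and the tensor-product decompositions to restrict which components and torsion corrections can occur. Both get the $d\alpha$, $d\beta$, $d(\beta\wedge\Phi)$ and $d(\alpha\wedge\Phi)$ rows from the eigenvalue characterisations ($\pi_7,\pi_{21}$, and $\beta\wedge\Phi=3*\beta$ or $-*\beta$), treat $d(X\lrcorner\Phi)$ through $\nabla X\diamond\Phi-X\lrcorner d\Phi+\nabla_X\Phi$ (that is, Cartan plus the Lie-derivative/$\diamond$ formula), and handle $d*\gamma_{48}$ and the $\eta$ rows via $d^*$ and (anti-)self-duality. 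The only difference is bookkeeping: you track torsion in index form via $\nabla\Phi=T\diamond\Phi$ and read the $4$-form rows as formal adjoints of the $3$-form computations, while the paper works at the level of forms, using an explicit contraction $\mathcal{C}(T_{48},\eta_{27})$.
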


\begin{table}[htp!]
\resizebox{\textwidth}{!}{%
  \renewcommand{\arraystretch}{1.4}%
  \setlength{\arraycolsep}{3pt}%
 $ \begin{array}{l c c c c c c c c}
  \hline
  d f                  & = &                                        &                             & \dd_8^{\,1} f                  &                            &                       &                       &           \\
  \hline 
  d(f\Phi)             & = &                                        &                             & (\dd_8^{\,1} f+f T_8)           &                            &                       &                       & f*T_{48}  \\
  \hline
  d\alpha              & = &                                        & \dd_7^{\,8}\alpha           &                                & \dd_{21}^{\,8}\alpha       &                       &                       &           \\ \hline
  d*(\alpha\wedge\Phi) & = & -\frac12(\dd_1^{\,8}\alpha+g(\alpha,T_8)) & -\tfrac12 \dd_7^{\,8}\alpha &                        &                            &             *(\alpha \wedge T_{48})_{27}         & \dd^8_{35}\alpha        &           \\ \hline
  d(\alpha\wedge\Phi)  & = &                                        & 3*\dd_7^{\,8}\alpha - (\alpha\wedge \dd\Phi)_7 &              & -*\dd_{21}^{\,8}\alpha -(\alpha\wedge d\Phi)_{21} & & &           \\ \hline
  \dd(*\alpha)           & = & -\,\dd_1^{\,8}\alpha                   &                             &                                &                            &                       &                       &           \\ \hline
  d\beta_7             & = &                                        &                             & \tfrac17(3\dd_8^{\,7}\beta-*(\beta_7\wedge \dd\Phi)) &   &                       &                       & \dd_{48}^{\,7}\beta  \\ \hline
  d\beta_{21}          & = &                                        &                             &  -\tfrac17\,(\dd_{8}^{21}\beta+*(\beta_{21}\wedge \dd\Phi)) &                 &                       &                       & \dd_{48}^{\,21}\beta \\ \hline
  d(\beta_7\wedge\Phi)    & = &                                     &                             & -3*\dd_8^{\,7}\beta            &                            &                       &                       &           \\ \hline
  d(\beta_{21}\wedge\Phi) & = &                                     &                             & *\dd_8^{\,21}\beta            &                            &                       &                       &           \\ \hline
  d\gamma_{48}         & = &    \frac{1}{14} g(\gamma_{48},T_{48})                                    & \dd_7^{\,48}\gamma          &                                &                            & \dd_{27}^{\,48}\gamma & \dd_{35}^{\,48}\gamma  &           \\ \hline
  d*\gamma_{48}        & = &                                        & \tfrac1{12}(j_\Phi(\dd_7^{\,48}\gamma)\wedge\Phi) &                       & -(\dd_{21}^{\,48}\gamma\wedge\Phi) &             &                       &           \\ \hline
  d\eta_{27}           & = &                                        &                             &              \mathcal C(T_{48},\eta_{27})                  &                            &                       &                       & \dd_{48}^{\,27}\eta  \\ \hline
  d\eta_{35}           & = &                                        &                             & \dd_8^{\,35}\eta     &                            &                       &                       & \dd_{48}^{\,35}\eta  \\ \hline
  
  \end{array} $
  }
\caption{Exterior derivative formulae for $d\Phi=T_8\wedge\Phi+*T_{48}$}
\label{table:dwithtorsion}
\end{table}

\begin{proof}
We first mention that an empty space in any of the columns of the table indicate that the contribution in the corresponding irreducible representation from $\dd^p_q$ is zero. Let $V_k$ denote the $k$-dimensional irreducible Spin(7)-representation. Then 
$$
\Omega^1_8\cong\Omega^3_8\cong\Omega^5_8\cong\Omega^7_8\cong TM\;(V_8),\qquad
\Omega^2_7\cong\Omega^4_7\cong\Omega^6_7\;(V_7),$$$$
\Omega^2_{21}\cong\Omega^6_{21}\cong\mathfrak{spin}(7),\quad
\Omega^3_{48}\cong\Omega^5_{48}\;(V_{48}),\quad
\Omega^4_1\cong\Omega^0,\quad
\Omega^4_{27}\cong S^2_0V_7,\quad
\Omega^4_{35}\cong S^2_0TM .
$$
Let \(E_q^p\subset\Lambda^pT^*M\) be an irreducible Spin(7)-subbundle. In the torsion-free case the type projections are parallel and
\[
\sigma_\xi(d_{q'}^q)=\pi_{q'}\circ(\xi\wedge\cdot):E_q^p\longrightarrow E_{q'}^{p+1}.
\] This symbol is Spin(7)-equivariant in \((\xi,\alpha)\), hence it can
be nonzero only if \(V_{q'}\) occurs in \(V_8\otimes V_q\) and in
\(\Lambda^{p+1}V_8^*\). For a Spin(7)-structure with torsion,
differentiating the type projections produces additional zero-order
terms linear in the intrinsic torsion \(W\cong V_8\oplus V_{48}\). A
torsion correction of type \(V_{q'}\) can occur only if
\(V_{q'}\subset W\otimes V_q\). We note the following decomposition of Spin(7)-representations (see, for instance, \cite[\textsection 2]{Spin7-flow_Dwivedi}): 
\begin{align}\label{eq:repdecomp}
\begin{split}
    V_8\!\otimes\!V_1&=V_8,\quad V_8\!\otimes\!V_7=V_8\oplus V_{48},\quad V_8\!\otimes\!V_8=V_1\oplus V_7\oplus V_{21}\oplus V_{35}, \quad V_8\!\otimes\!V_{21}=V_8\oplus V_{48}\oplus V_{112},\\
    V_8\otimes V_{27}&=V_{48}\oplus V_{168}, \quad V_8 \otimes V_{35} = V_8 \oplus V_{48} \oplus V_{224},\quad
V_8\!\otimes\!V_{48}\supset V_7\oplus V_{21}\oplus V_{27}\oplus V_{35}, \\
    V_7 \otimes V_{48} &= V_8 \oplus V_{48} \oplus V_{112} \oplus V_{168},  \quad V_{21} \otimes V_{48} \supset V_8 \oplus V_{48}, \quad
V_{27} \otimes V_{48} \supset V_8 \oplus V_{48}, \\ V_{35} \otimes V_{48} &\supset V_8 \oplus V_{48}, \qquad V_{48} \otimes V_{48} \supset V_1 \oplus V_7 \oplus V_{21} \oplus V_{27} \oplus V_{35} \oplus V_{48}.
\end{split}
\end{align}

%================== f in Omega^0 ====================
When $f \in \Omega^0$
\[
d f = \dd_8^{\,1} f, \qquad
d(f\Phi) = \dd_8^{\,1} f \wedge \Phi +f d\Phi =  (\dd_8^{\,1} f+fT_8) \wedge \Phi + f*T_{48} .
\]This gives the first and the second row.

%==================== alpha in Omega^1 ====================
For $\alpha \in \Omega^1$

\[
d\alpha = (d\alpha)_7 + (d\alpha)_{21}
         = \dd_7^{\,8}\alpha + \dd_{21}^{\,8}\alpha .
\]

we can compute from the definitions of $\Lambda^2_7,\Lambda^2_{21}$ 
\[
(d\alpha)_7   = \tfrac14\big(*(d\alpha\wedge\Phi) + d\alpha\big) = \dd_7^{\,8}\alpha,
\]
\[
(d\alpha)_{21} = -\tfrac14\big(*(d\alpha\wedge\Phi) - 3 d\alpha\big) = \dd_{21}^{\,8}\alpha,
\]thus providing the entries in the third row.

Similarly
$d(\alpha\wedge\Phi)\in\Lambda^6\cong\Lambda^2$:
\[
d\alpha\wedge\Phi-\alpha\wedge d\Phi = 3(*\dd_7^{\,8}\alpha - *(\alpha\wedge T_8)_7)-(\alpha\wedge T_{48})_7-( *\dd_{21}^{\,8}\alpha - *(\alpha\wedge T_8)_{21}) -(\alpha\wedge T_{48})_{21}.
\]

For $X\in\Gamma(TM)$, Cartan's formula implies $d(X\lrcorner \Phi) = \mathcal{L}_X\Phi-X\lrcorner d\Phi$. 
From \cref{eq:liePhiexp}
\begin{align*}
    \mathcal{L}_X\Phi = \left(\frac{1}{2}\mathcal{L}_Xg + T(X)+(\nabla(X))_7\right) \diamond \Phi
\end{align*} where $T\in\Omega^1\otimes\Omega^2_7$ is the torsion tensor and 

\[\diamond \ \Phi \colon \Omega^0\oplus Sym^2_0\oplus \Lambda^2_7\oplus \Lambda^2_{21} \to  \Omega^4_1\oplus\Omega^4_7\oplus \Omega^4_{35}
    \] with $\Omega^2_{21}\cong \ker(\diamond \ \Phi)$. 
The $\Omega^4_{1\oplus 35}$ parts in $d(X\lrcorner \Phi)$ are images of symmetric $2$-tensors on $M$ under the $\diamond$-operator. That is, we have
\begin{align*}
    d(X\lrcorner\Phi)& = \left( \left( -\frac{d^*X}{8} + T_8(X)\right) g \right) \diamond \Phi + (T(X)+(\nabla(X))_7+\frac{1}{2}\pi_7(T_8\wedge X))\diamond \Phi \\
    & \quad + \left(Sym^2_0 (\frac{1}{2}\mathcal{L}_Xg + T_8\otimes X)\right)\diamond \Phi -T_8(X)\Phi - X\lrcorner *T_{48}.
\end{align*}
Since from \cref{eq:repdecomp}, $V_8\otimes V_{48}$ has no $V_1$ component $\pi_1(X\lrcorner *T_{48})=0$ and thus we get
\begin{align*}
    \pi_1(d(X\lrcorner \Phi))&= -\frac{1}{2} (d^*X+T_8(X)) \Phi,\\
    \pi_7(d(X\lrcorner \Phi))&= (T(X)+(\nabla(X))_7+\frac{1}{2}\pi_7(T_8\wedge X))\diamond \Phi - \pi_7(X\lrcorner *T_{48}),\\
    \pi_{27}(d(X\lrcorner \Phi))&= - \pi_{27}(X\lrcorner *T_{48}),\\
\pi_{35}(d(X\lrcorner \Phi))&=  \left(Sym^2_0 (\frac{1}{2}\mathcal{L}_Xg + T_8\otimes X)\right)\diamond \Phi - \pi_{35}(X\lrcorner *T_{48}). 
\end{align*} Moreover if we denote by $j_\Phi$ the adjoint of $\diamond\ \Phi$ then by \cite[Proposition 2.4]{Spin7-flow_Dwivedi}

\begin{align*}
\pi_{35}(X\lrcorner *T_{48}) &=\frac12\left(X\lrcorner *T_{48}-T_{48}\wedge X\right),\\
\pi_7(X\lrcorner *T_{48})&=
\frac1{32}j_\Phi\left(\frac12\left(X\lrcorner *T_{48}+T_{48}\wedge X\right)
\right) \diamond \Phi,\\
\pi_{27}(X\lrcorner *T_{48})&=
\frac12\left(X\lrcorner *T_{48}+T_{48}\wedge X^\flat\right)-\pi_7(X\lrcorner *T_{48}).
\end{align*}
Note that since $V_8\otimes V_8$ has no $V_{27}$ component $d^8_{27}(X\lrcorner\Phi)=-\pi_{27}(X\lrcorner *T_{48})$.

%==================== beta in Omega^2 ====================

\medskip

For $\beta \in \Omega^2$

\[
d\beta = (d\beta)_8 + (d\beta)_{48}
        = d\beta_7 + d\beta_{21}
        = \dd_8^{\,7}\beta + \dd_{48}^{\,7}\beta
        + \dd_8^{\,21}\beta + \dd_{48}^{\,21}\beta .
\]
Since 
\[
\beta_7 = \tfrac14\big(*(\beta\wedge\Phi)+\beta\big),\qquad
d\beta_7 = \tfrac14\big(d*(\beta\wedge\Phi)+d\beta\big),
\]
we obtain
\begin{align*}
\dd_8^{\,7}\beta
   &= -\tfrac17 *\!\big(d(\beta_7)\wedge\Phi\big)
    = -\tfrac{1}{28}*\!\big(d*(\beta\wedge\Phi)\wedge\Phi + d\beta\wedge\Phi\big)\\
   &= -\tfrac{1}{28}*\!\big(d(*(\beta\wedge\Phi)\wedge\Phi) + d(\beta\wedge\Phi)-(*(\beta\wedge\Phi)+\beta)\wedge d \Phi\big)\\
   &= -\tfrac17 *\,(d(\beta_7\wedge\Phi)-\beta_7\wedge d\Phi) = \tfrac17\,(3 d^*\beta_7-*(\beta_7\wedge d\Phi)),\\[4pt]
\dd_{48}^{\,7}\beta
   &= d(\beta_7) - \dd_8^{\, 7}\beta\wedge\Phi,\\[4pt]
\dd_8^{\,21}\beta
   &= -\tfrac17 *\!\big(d(\beta_{21})\wedge\Phi\big)
    = -\tfrac17 *\!\big(d(\beta_{21}\wedge\Phi)-\beta_{21}\wedge d\Phi\big)
    = -\tfrac17\,(d^*\beta_{21}+*(\beta_{21}\wedge d\Phi)),\\[4pt]
\dd_{48}^{\,21}\beta
   &= d(\beta)_{21} + \tfrac17 *\!\big(d^*\beta_{21}\wedge\Phi\big).
\end{align*}

Now we compute
$d(\beta\wedge\Phi)\in\Lambda^7\cong\Lambda^1$:
\[
d(\beta\wedge\Phi)
 = d(\beta_7\wedge\Phi + \beta_{21}\wedge\Phi)
 = d(3*\beta_7 - *\beta_{21})
 = 3 d*\beta_7 - d*\beta_{21}.
\]

and
\begin{align*}
d*(\beta\wedge\Phi)
 &= d(3\beta_7 - \beta_{21}) = 3 d\beta_7 - d\beta_{21}\\
 &= \big(3\,\dd_8^{\,7}\beta - \dd_8^{\,21}\beta\big)
  + \big(3\,\dd_{48}^{\,7}\beta - \dd_{48}^{\,21}\beta\big).
\end{align*}

Using Schur's lemma and \cref{eqn:gen_diamond} we have 
\begin{align*}
    d(\beta \diamond \Phi)& = -4 *\dd_8^{\,7}\beta  + 2 *\dd_{48}^{\,7}\beta  -\beta \diamond d\Phi.
\end{align*} 
Thus, one obtains the decomposition of $d\eta$ for $\eta\in\Omega^4_7$ in terms of $\dd^7_8j_\Phi(\eta), \dd^7_{48}j_\Phi(\eta)$. 
%==================== gamma in Omega^3 ====================

\medskip

When $\gamma \in \Omega^3_{48}$

\begin{align*}
    \dd_1^{\,48}\gamma &= \frac{1}{14}*(d\gamma\wedge\Phi)\Phi = \frac{1}{14}*(\gamma \wedge d\Phi)\Phi=\frac{1}{14} g(\gamma ,T_{48}) \Phi, 
\qquad
\dd_{35}^{\,48}\gamma = \frac12(d \gamma - * d \gamma), \\
\dd_7^{\,48}\gamma &= \frac{1}{32} j_\Phi\left(\frac{1}{2}(d\gamma+* d\gamma) -  \dd_1^{\,48}\gamma \right)\diamond \Phi, \qquad \dd_{27}^{\,48}\gamma = \frac{1}{2}(d\gamma +* d\gamma) - \dd_1^{\,48}\gamma-\dd_7^{\,48}\gamma.
\end{align*}
Again since $V_8\otimes V_{48}$ has no $V_1$ component from \cref{eq:repdecomp}, $\pi_1(d(\gamma_{48}))$ is proportional to $T_{48}$. 

\medskip

Since $\gamma$ is $3$-form on an $8$-manifold, using $*d(*\gamma)=-d^*\gamma$, 
\begin{align*}
    \pi^6_7\big(d(*\gamma)\big) &=\frac14\left(d(*\gamma)-\Phi\wedge d^*\gamma\right), \\
    \pi^6_{21}\big(d(*\gamma)\big)&=\frac14\left(3d(*\gamma)+\Phi\wedge d^*\gamma\right).
\end{align*}

\medskip

Now suppose $\eta_{27}\in \Omega^4_{27}$ and $\eta_{35}\in\Omega^4_{35}$. The 8-component of $d\eta_{27}$ comes solely from $T_{48}$ as $V_8 \not\subset V_8\otimes V_{27}$. Since $V_8\otimes V_{27}$ has no $V_8$ component 
$$
X\lrcorner\eta_{27}\in\Lambda^3_{48}
\qquad\text{for every }X\in TM.
$$
There is a canonical Spin(7)-equivariant bilinear contraction

$$
\mathcal C:\Lambda^3_{48}\otimes\Lambda^4_{27}
\longrightarrow\Lambda^1_8,
\qquad
\langle\mathcal C(T_{48},\eta_{27}),X^\flat\rangle
=
\langle T_{48},X\lrcorner\eta_{27}\rangle .
$$

Thus  
\begin{align*}
    \pi_8(d\eta_{27})&=-\frac17\left[*\big(d^*\eta_{27}\wedge \Phi\big)
\right]\wedge \Phi=\frac17 \left( \sum_{j=1}^8 \langle T_{48},e_j\lrcorner\eta_{27}\rangle e^j \right)\wedge\Phi = \frac{1}{7} \ \mathcal C(T_{48},\eta_{27})\wedge\Phi.
\end{align*}
Thus,
 $d\eta_{27} =\pi_8(T_{48}\cdot\eta_{27})+d^{27}_{48}\eta.$ 
Lastly when $\eta_{35}\in\Omega^4_{35}$, $*\eta_{35}=-\eta_{35}$ from \cref{eq:sdasdforms}, and thus 
 \begin{align*}
     \pi_8(d\eta_{35})&= \frac{1}{7}*(d^*\eta_{35} \wedge\Phi)\wedge\Phi
=\frac17*\left[\sum_i(\nabla_{e_i}\eta)\wedge(e_i\lrcorner \Phi)+
\sum_i(e_i\lrcorner \eta)\wedge\big(T(e_i)\diamond \Phi\big)+
\eta\wedge*(T_8\wedge \Phi)-\eta\wedge T_{48}\right]\wedge \Phi.
 \end{align*} 
The 5-form $d\eta_{35}$ has a $V_8$ component coming from the exterior derivative, $T_8$ and $T_{48}$ as evident from the representation theory. 
\end{proof}

We now state the following corollary for torsion-free Spin(7)-structures which we will be using in \Cref{sec:cohomology} for describing the $\ddphi$-Bott--Chern cohomology. This is the analogue of \cite[Table 1]{bryant-someremarks} and \cite[Figure 1]{chan-karigiannis-tsang} for Spin(7)-manifolds.\footnote{The decompositions in \Cref{table:dtorsionfree} have also been derived independently by S. Karigiannis, V. Majewski and T. Pacini (private communication).}

\begin{corollary}
  Let $M^8$ be a smooth manifold with a torsion-free Spin(7)-structure $\Phi$. Then, in the same setting as in \Cref{lem:ddecomp}, we have:

\begin{table}[htp!]
\resizebox{\textwidth}{!}{
 \renewcommand{\arraystretch}{1.2}
\setlength{\arraycolsep}{8pt}
$\begin{array}{l c c c c c c c c} \hline
d f          & = & &   & \dd_8^{\,1} f    & &    & &                 \\ \hline
d(f\Phi)     & = &   &                                & \dd_8^{\,1} f      &  &  & & \\ \hline
d\alpha      & = & & \dd_7^{\,8}\alpha    &                               & \dd_{21}^{\,8}\alpha & & & \\ \hline
d*(\alpha\wedge\Phi) & = & -\frac12\dd_1^{\,8}\alpha  &-\tfrac12 \dd_7^{\,8}\alpha & & & & \dd^8_{35}\alpha & \\ \hline
d(\alpha\wedge\Phi)  & = &   & 3*\dd_7^{\,8}\alpha &           &   -*\dd_{21}^{\,8}\alpha  & &    &             \\ \hline
d(*\alpha)   & = & -\dd_1^{\,8}\alpha    &  &   &     &  &  &            \\ \hline
d\beta_7     & = & &  & \tfrac37*(\dd_8^{\,7}\beta\wedge\Phi)  & & & &\dd_{48}^{\,7}\beta  \\ \hline
d\beta_{21}  & = & & & -\tfrac17*(\dd_8^{\,21}\beta\wedge\Phi) & & & &\dd_{48}^{\,21}\beta \\ \hline
d(\beta_7\wedge\Phi)    & = &  &  & -3*\dd_8^{\,7}\beta & & & &      \\ \hline
d(\beta_{21}\wedge\Phi) & = &    &    & *\dd_8^{\,21}\beta         &   &       &  &               \\ \hline
d\gamma_{48} & = & & \dd_7^{\,48}\gamma  &      &       &    \dd_{27}^{\,48}\gamma    &\dd_{35}^{\,48}\gamma    &                     \\ \hline
d*\gamma_{48}& = & &\tfrac1{12}(j_\Phi(\dd_7^{\,48}\gamma)\wedge\Phi) &  & -(\dd_{21}^{\,48}\gamma\wedge\Phi) &   &      &      \\ \hline
d\eta_{27}   & = &  & &  &  &  &  & \dd_{48}^{\,27}\eta  \\ \hline
d\eta_{35}   & = & &  & \dd_8^{\,35}\eta & &  &  & \dd_{48}^{\,35}\eta  \\ \hline
\end{array}$}
\caption{Exterior derivative formulae for torsion-free $\Phi$}
\label{table:dtorsionfree}
\end{table}  
\end{corollary}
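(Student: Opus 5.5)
The plan is to specialize \Cref{lem:ddecomp} (\Cref{table:dwithtorsion}) row by row, setting $d\Phi=0$. By \cref{eq:intrinsictorsions} this means $T_8=0$ and $T_{48}=0$, hence also $T=0$ in \cref{eq:torsion}. Every zero-order correction in \Cref{table:dwithtorsion} is linear in the intrinsic torsion, so these corrections drop out: $fT_8$, $f*T_{48}$, $g(\alpha,T_8)$, $*(\alpha\wedge T_{48})_{27}$, $(\alpha\wedge d\Phi)_{7}$, $(\alpha\wedge d\Phi)_{21}$, $*(\beta\wedge d\Phi)$, $g(\gamma_{48},T_{48})$ and $\mathcal C(T_{48},\eta_{27})$. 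Rows 1--6 and 9--14 of \Cref{table:dtorsionfree} then follow verbatim.

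For instance, $d(f\Phi)=df\wedge\Phi$ gives row 2. In rows 4 and 5, the formula for $d(X\lrcorner\Phi)$ obtained in the proof, with $T=0$, reduces to $\cL_X\Phi=(\tfrac12\cL_Xg+(\del X)_7)\diamond\Phi$. This has no $\Omega^4_{27}$-component, which explains why the $27$ entry of row 4 is empty. In row 13, the $V_8$-component of $d\eta_{27}$ vanishes by Schur's lemma, since $V_8\not\subset V_8\otimes V_{27}$ in \cref{eq:repdecomp} and there is no torsion to produce one. In row 11, the $V_1$-component of $d\gamma_{48}$ vanishes because $V_8\otimes V_{48}$ contains no $V_1$.

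The rows needing a little more care are rows 7 and 8. Here I would write the $8$-components of $d\beta_7$ and $d\beta_{21}$ in the form of \Cref{table:dtorsionfree}, namely $\tfrac37*(\dd^7_8\beta\wedge\Phi)$ and $-\tfrac17*(\dd^{21}_8\beta\wedge\Phi)$. I will use the computation in the proof: with $d\Phi=0$,
\[
\pi_8(d\beta_7)=-\tfrac17*(d\beta_7\wedge\Phi)\lrcorner\Phi=-\tfrac17*\big(d(\beta_7\wedge\Phi)\big)\lrcorner\Phi .
\]
Next I apply $\beta_7\wedge\Phi=3*\beta_7$ and $\beta_{21}\wedge\Phi=-*\beta_{21}$ from \cref{eq:Omega2decomp1}, together with $*d*=\pm d^*$ on $2$-forms in dimension $8$. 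This identifies the $1$-form with $d^*\beta_7$ (respectively $d^*\beta_{21}$) and yields the stated constants $3/7$ and $-1/7$.

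The same identities give rows 9 and 10 directly: $d(\beta_7\wedge\Phi)=3\,d*\beta_7=-3*\dd^7_8\beta$, and $d(\beta_{21}\wedge\Phi)=-d*\beta_{21}=*\dd^{21}_8\beta$. These hold once $\dd^{p}_8$ is normalized as the $\Omega^3_8$-component, which is identified with a $1$-form via $X\mapsto X\lrcorner\Phi$.

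The main obstacle I anticipate is purely bookkeeping: the identification $\Omega^3_8\cong\Omega^1$ must be used consistently, including the normalization $|X\lrcorner\Phi|^2=7|X|^2$ from \cref{eq:impiden3}. The signs of $*$ on $\Omega^6$ and $\Omega^7$ must also be tracked, so that the constants in rows 7--10 and the $\tfrac1{12}$ in row 12 match those in \Cref{table:dwithtorsion} after setting the torsion to zero. I would check one representative row in an orthonormal frame, for instance on flat $\bR^8$ with the standard $\Phi$ and $\beta_7=\pi_7(dx^1\wedge dx^2)$ times a linear function, to fix the constants.
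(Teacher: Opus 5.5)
Your route is the paper's own one-line proof: set $T_8=T_{48}=0$ in the table of \Cref{lem:ddecomp} row by row, and use Schur's lemma for the entries that vanish. This works for rows 1--6 and 11--14. (Row 5 is just $d(\alpha\wedge\Phi)=d\alpha\wedge\Phi$; only row 4 uses the Lie-derivative formula for $d(X\lrcorner\Phi)$.) Your computation for rows 7--8 is also correct. With $d\Phi=0$, one has $\pi_8(\gamma)=X\lrcorner\Phi$ with $X^\flat=-\tfrac17*(\gamma\wedge\Phi)$. Combining this with $\beta_7\wedge\Phi=3*\beta_7$, $\beta_{21}\wedge\Phi=-*\beta_{21}$ and $d^*=-*d*$ gives $\pi_8(d\beta_7)=\tfrac37*(d^*\beta_7\wedge\Phi)$ and $\pi_8(d\beta_{21})=-\tfrac17*(d^*\beta_{21}\wedge\Phi)$. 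These are rows 7--8 with $\dd^{7}_{8}\beta=d^*\beta_7$ and $\dd^{21}_{8}\beta=d^*\beta_{21}$.

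The step that fails is rows 9--10. With that same identification, your equalities $3\,d*\beta_7=-3*\dd^{7}_{8}\beta$ and $-d*\beta_{21}=*\dd^{21}_{8}\beta$ are off by a sign. In dimension $8$, $d^*=-*d*$ on $\Omega^2$ and $**=-1$ on $\Omega^7$, so $d*\beta=*d^*\beta$. Hence $d(\beta_7\wedge\Phi)=3*\dd^{7}_{8}\beta$ and $d(\beta_{21}\wedge\Phi)=-*\dd^{21}_{8}\beta$.

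Your fallback normalization, taking $\dd^{p}_{8}\beta$ to be the $1$-form $X^\flat$ with $\pi_8(d\beta_p)=X\lrcorner\Phi$, does not help. It makes the coefficient in row 7 equal to $1$ rather than $\tfrac37$, and row 9 becomes $7*\dd^{7}_{8}\beta$.

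In fact, no single meaning of $\dd^{7}_{8}\beta$ makes the printed rows 7 and 9 hold together. Substituting row 9 into $X^\flat=-\tfrac17*d(\beta_7\wedge\Phi)$ gives $X^\flat=-\tfrac37\dd^{7}_{8}\beta$, the opposite of row 7. The same clash occurs between rows 8 and 10. So this is not bookkeeping that your proposed frame check will settle; the check will expose a sign inconsistency. The statement inherits it from the torsion table, whose proof stops at $d(\beta\wedge\Phi)=3\,d*\beta_7-d*\beta_{21}$ and never derives the signs in rows 9--10.

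To turn your argument into a proof, fix one convention. For example, take $\dd^{p}_{8}\beta:=d^*\beta_p$, viewed in $\Omega^3_8$ via $\alpha\mapsto*(\alpha\wedge\Phi)=\alpha^\sharp\lrcorner\Phi$, and state rows 9--10 with reversed signs. Alternatively, take $\dd^{p}_{8}\beta:=-d^*\beta_p$, keep rows 9--10, and reverse the signs in rows 7--8.
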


%\newpage
\begin{proof}
The proof immediately follows from \Cref{table:dwithtorsion}. When $\Phi$ is torsion free $\dd^p_{q\to q'}\neq0$ iff $V_{q'}$ occurs in both $V_8\otimes V_q$ and $\Lambda^{p+1}$.
\end{proof}

\subsection{Second-order identities and Laplacians for torsion-free Spin(7)-structures}

The identity $d^2 = 0$ implies that
\begin{align*}
\sum_{q}\dd^{q}_r\dd^p_q \omega=0,\ \qquad \omega\in \Omega^k(M),
\end{align*}
and the sum is over the irreducible summands of $\Omega^{k+1}(M)$. This is equivalent to the second order identities on the operators $\dd^p_q$
listed in \Cref{table:d2}. Finally, the formulas for the Hodge Laplacians in terms of the operators $\dd^p_q$ are as given in \Cref{table:d3}. The bracketed numbers on the right-hand side of \Cref{table:d3} indicate the normalized principal-symbol weights of the summands in the same order.

We assume throughout this section that \(\nabla\Phi=0\). Hence all Spin(7)-type
projections are parallel. Since the decomposition of $\Omega^k=\bigoplus_{p}\Omega^k_p$ into Spin(7)-irreducible representations is orthogonal, we have

\begin{align*}
    d^*d&=\sum_q (\dd^p_q)^*\dd^p_q,\qquad dd^*=\sum_r \dd^r_p(\dd^r_p)^*.
\end{align*}

\noindent
Therefore

\begin{align}\label{eqn:laplacian_decomp}
    \Delta|_{\Omega^k_p}&=d^*d+dd^*=\sum_q (\dd^p_q)^*\dd^p_q+\sum_r \dd^r_p(\dd^r_p)^*.
\end{align}

\begin{table}[htp!]
\centering
\[
\begin{aligned}
\Omega^0:\ & \ddtwo{8}{7}\ddtwo{1}{8}=0,\qquad \ddtwo{8}{21}\ddtwo{1}{8}=0.\\[2pt]
\Omega^1_8:\ & \ddtwo{7}{8}\ddtwo{8}{7}+\ddtwo{21}{8}\ddtwo{8}{21}=0,\qquad
              \ddtwo{7}{48}\ddtwo{8}{7}+\ddtwo{21}{48}\ddtwo{8}{21}=0.\\[2pt]
\Omega^2_7:\ & \ddtwo{8}{1}\ddtwo{7}{8}=0,\qquad
               \ddtwo{8}{7}\ddtwo{7}{8}+\ddtwo{48}{7}\ddtwo{7}{48}=0,\\
             & \ddtwo{48}{27}\ddtwo{7}{48}=0,\qquad
               \ddtwo{8}{35}\ddtwo{7}{8}+\ddtwo{48}{35}\ddtwo{7}{48}=0.\\[2pt]
\Omega^2_{21}:\ & \ddtwo{8}{1}\ddtwo{21}{8}=0,\qquad
                  \ddtwo{8}{7}\ddtwo{21}{8}+\ddtwo{48}{7}\ddtwo{21}{48}=0,\\
                & \ddtwo{48}{27}\ddtwo{21}{48}=0,\qquad
                  \ddtwo{8}{35}\ddtwo{21}{8}+\ddtwo{48}{35}\ddtwo{21}{48}=0.\\[2pt]
\Omega^3_8:\ & \ddtwo{1}{8}\ddtwo{8}{1}+\ddtwo{7}{8}\ddtwo{8}{7}+\ddtwo{35}{8}\ddtwo{8}{35}=0,\qquad
               \ddtwo{7}{48}\ddtwo{8}{7}+\ddtwo{35}{48}\ddtwo{8}{35}=0.\\[2pt]
\Omega^3_{48}:\ & \ddtwo{7}{8}\ddtwo{48}{7}+\ddtwo{35}{8}\ddtwo{48}{35}=0,\qquad
                  \ddtwo{7}{48}\ddtwo{48}{7}+\ddtwo{27}{48}\ddtwo{48}{27}+\ddtwo{35}{48}\ddtwo{48}{35}=0.
\end{aligned}
\]
\caption{Second-order identities $d^2=0$}
\label{table:d2}
\end{table}

\begin{table}[htp!]
\centering
\[
\begin{aligned}
\Delta f&=\ddtwos{1}{8}\ddtwo{1}{8}\,f && [\,1\,]=\nabla^\ast\nabla f\\[3pt]
\Delta\alpha&=\big[\ddtwos{8}{7}\ddtwo{8}{7}+\ddtwos{8}{21}\ddtwo{8}{21}+\ddtwo{1}{8}\ddtwos{1}{8}\big]\alpha
&&\big[\tfrac{7}{32},\tfrac{21}{32},\tfrac{4}{32}\big]=\nabla^\ast\nabla\alpha\\[3pt]
\Delta\gamma&=\big[\ddtwos{7}{8}\ddtwo{7}{8}+\ddtwos{7}{48}\ddtwo{7}{48}+\ddtwo{8}{7}\ddtwos{8}{7}\big]\gamma
&&\big[\tfrac{9}{28},\tfrac{12}{28},\tfrac{7}{28}\big]=\nabla^\ast\nabla\gamma\\[3pt]
\Delta\beta&=\big[\ddtwos{21}{8}\ddtwo{21}{8}+\ddtwos{21}{48}\ddtwo{21}{48}+\ddtwo{8}{21}\ddtwos{8}{21}\big]\beta
&&\big[\tfrac{1}{28},\tfrac{20}{28},\tfrac{7}{28}\big]=\nabla^\ast\nabla\beta-2\R|_{\sff}\beta\\[3pt]
\Delta\tau&=\big[\ddtwos{48}{7}\ddtwo{48}{7}+\ddtwos{48}{27}\ddtwo{48}{27}+\ddtwos{48}{35}\ddtwo{48}{35}
             +\ddtwo{7}{48}\ddtwos{7}{48}+\ddtwo{21}{48}\ddtwos{21}{48}\big]\tau
&&\big[\tfrac{1}{32},\tfrac{9}{32},\tfrac{10}{32},\tfrac{2}{32},\tfrac{10}{32}\big]\\[3pt]
\Delta\rho&=\big[\ddtwos{27}{48}\ddtwo{27}{48}+\ddtwo{48}{27}\ddtwos{48}{27}\big]\rho
&&\big[\tfrac12,\tfrac12\big]\\[3pt]
\Delta\lambda&=\big[\ddtwos{35}{8}\ddtwo{35}{8}+\ddtwos{35}{48}\ddtwo{35}{48}
               +\ddtwo{8}{35}\ddtwos{8}{35}+\ddtwo{48}{35}\ddtwos{48}{35}\big]\lambda
&&\big[\tfrac{1}{14},\tfrac{6}{14},\tfrac{1}{14},\tfrac{6}{14}\big]
\end{aligned}
\]
\caption{The Hodge Laplacian with symbol weights, $f\in \Omega^0, \alpha\in\Omega^1_8,\ \gamma\in\Omega^2_7,\ \beta\in\Omega^2_{21},\ \tau\in\Omega^3_{48},\ \rho\in\Omega^4_{27},\ \lambda\in\Omega^4_{35}$}
\label{table:d3} 
\end{table}

\noindent
Moreover, the Weitzenböck formula gives
\begin{align*}
    \Delta=\nabla^*\nabla+q(R).
\end{align*}
For a torsion-free Spin(7)-structure the metric is Ricci-flat. Hence
$q(R)=0$ on functions and $1$-forms. Moreover the curvature operator
$R:\Lambda^2\to\Lambda^2$ has image in
$\Lambda^2_{21}\cong\mathfrak{spin}(7)$, and annihilates
$\Lambda^2_7$. Thus
\begin{align*}
    \Delta|_{\Omega^0_1}&=\nabla^*\nabla,
\qquad
\Delta|_{\Omega^1_8}=\nabla^*\nabla,
\qquad
\Delta|_{\Omega^2_7}=\nabla^*\nabla,
\end{align*}
while on \(\Omega^2_{21}\),
\begin{align*}
    \Delta=\nabla^*\nabla-2R|_{\mathfrak{spin}(7)},
\end{align*}and on the remaining types $\Delta=\nabla^\ast\nabla+\mathcal C$ with
$\mathcal C\in S^2(\sff)$ built from $\R$.

For a first-order summand $P$ appearing in \cref{eqn:laplacian_decomp}, the normalized symbol
weight on $E=\Omega^k_p$ is given by by
\begin{align*}
    \overline c(P)&=\frac{1}{\dim E}
\operatorname{tr}_E\left(|\xi|^{-2}\sigma_2(P)(\xi)\right),\qquad |\xi|=1.
\end{align*}
Since Spin(7) acts transitively on the unit sphere in $\mathbb R^8$, $\overline c(P)$ is independent of the choice of the unit vector $\xi$. Let $E=\Lambda^k_p$ be one irreducible Spin(7)-summand, and fix a unit vector $\xi$. Define
\begin{align*}
  \varepsilon_\xi(\omega)&=\xi\wedge \omega,
\qquad
\iota_\xi(\omega)=\xi^\sharp\lrcorner \omega.  
\end{align*}
Then the principal symbol of $(d^p_q)^*d^p_q$ on $E=\Lambda^k_p$ is $\iota_\xi\pi_q\varepsilon_\xi:E\to E$. The normalized symbol weight of this term is
\begin{align*}
    \overline c(d^p_q)
=
\frac1{\dim E}
\operatorname{tr}_E\big(\iota_\xi\pi_q\varepsilon_\xi\big).
\end{align*}
Similarly, for the term $d^r_p(d^r_p)^*,$ the symbol weight is
\begin{align*}
    \overline c(d^r_p)^*=\frac1{\dim E}\operatorname{tr}_E\big(\varepsilon_\xi\pi_r\iota_\xi\big).
\end{align*}
For $E\subset \Lambda^k$, since $|\varepsilon_\xi\omega|^2+|\iota_\xi\omega|^2=|\omega|^2$,
over an orthonormal basis $\{e^i\}$, using
\begin{align*}
    \sum_i \varepsilon_{e^i}\iota_{e^i}&=k\,\mathrm{id}_{\Lambda^k},
\qquad
\sum_i \iota_{e^i}\varepsilon_{e^i}=(8-k)\,\mathrm{id}_{\Lambda^k},
\end{align*} gives
\begin{align}\label{eqn:sum_c}
    \frac1{\dim E}\operatorname{tr}_E(\varepsilon_\xi\iota_\xi)&=\frac{k}{8}, \qquad
    \frac1{\dim E}\operatorname{tr}_E(\iota_\xi\varepsilon_\xi)=\frac{8-k}{8},
\end{align} which sums to $1$ as required by $\sigma_2(\Delta)(\xi)=|\xi|^2\mathrm{id}$.  The computations of the normalized symbol weights for the summand in \Cref{table:d3} are straightforward but cumbersome. We demonstrate it for $\tau\in\Omega^3_{48}$ and note that one can use similar techniques to compute all the symbol weights. 

Take $\Phi$ to be the standard Cayley form and $\xi=e^1$. At any point $p\in M$ define $W=\langle e^1\rangle^\perp$. We write 
\begin{align*}
    \Phi=e^1\wedge\varphi + \psi,
\end{align*} where $\varphi \in \Omega^3(W)$ is a $\G2$-structure on $W$ and $\psi=*_W\varphi$. Since $\rm{G}_2\subset \operatorname{Spin(7)}$ we have the $\rm{G}_2$-orthogonal decomposition 
\begin{align*}
    \Omega^3_{48}&= \Omega^3_{27}\oplus e^1\wedge\Omega^2_{14}\oplus\left\{4e^1\wedge (u\lrcorner \varphi)+3(u\lrcorner \psi):u\in W\right\}.
\end{align*} Under the above decomposition if  $\tau = \xi_1+e^1\wedge\xi_2 +4e^1\wedge (u\lrcorner \varphi)+3(u\lrcorner \psi) $ then $\iota_{e^1}\tau = \beta+4 (u\lrcorner \varphi)$. Since $\beta\in\Lambda^2_{14}$, 
\begin{align*}
    \pi_7(\iota_{e_1}\tau) &= 4\pi_7(u\lrcorner \varphi) = 3(u\lrcorner\varphi+e^1\wedge u). 
\end{align*} Combining the three ${\rm{G}}_2$-summand of $\Lambda^3_{48}$ and using the identities $\|u\lrcorner\varphi\|^2=3\|u\|^2, \|u\lrcorner\psi\|^2=4\|u\|^2$, we get
\begin{align*}
   \overline{c}((\dd^{7}_{48})^*)&= \frac{1}{48}\operatorname{tr}_{\Lambda^3_{48}}
|\pi_7\iota_{e_1}\tau|^2=\frac{7}{48}\cdot\frac{9\|u\lrcorner\varphi+e^1\wedge u\|^2}{\|4e^1\wedge (u\lrcorner \varphi)+3(u\lrcorner \psi)\|^2} = \frac{7}{48}\cdot \frac{3}{7} =\frac{1}{16}.
\end{align*}
Similarly since $ \pi_{21}(\iota_{e_1}\tau) =  \beta + u\lrcorner \varphi -3 e^1\wedge u$, 
\begin{align*}
   \overline{c}((\dd^{21}_{48})^*)&= \frac{1}{48}\operatorname{tr}_{\Lambda^3_{48}}
|\pi_{21}\iota_{e_1}\tau|^2=\frac{14}{48} \|\xi_2\|^2+\frac{7}{48}\cdot\frac{\|u\lrcorner\varphi-3e^1\wedge u\|^2}{\|4e^1\wedge (u\lrcorner \varphi)+3(u\lrcorner \psi)\|^2}  =\frac{5}{16}.
\end{align*}
The identity $\pi^2_7(\iota_{e_1}\tau)\diamond \Phi=-8\,\pi^4_7(e^1\wedge\tau)$, and $|A\diamond\Phi|^2=32|A|^2$ imply
\begin{align*}
    \overline{c}(\dd^{48}_{7})&=\frac{1}{2} \overline{c}((\dd^{7}_{48})^*) =\frac{1}{32}. 
\end{align*}
Similarly using that $\pi_{35}(e^1\wedge\tau)$ is the anti-self dual part of $e^1\wedge\tau$ we compute 
\begin{align*}
    \overline{c}(\dd^{48}_{35})&=\frac{1}{48}\left(\frac{27\|e^1\wedge\xi_1\|^2}{2}+\frac{7}{2} \frac{9\|e^1\wedge(u\lrcorner\psi)\|^2}{\|4e^1\wedge (u\lrcorner \varphi)+3(u\lrcorner \psi)\|^2}\right)= \frac{5}{16},
\end{align*}
and then \cref{eqn:sum_c} implies
\begin{align*}
    \overline{c}(\dd^{48}_{27})=\frac{5}{8}-\overline{c}(\dd^{48}_{7})-\overline{c}(\dd^{48}_{35}) = \frac{9}{32}.
\end{align*}

\section{Bott-Chern type Cohomology of Spin(7)-manifolds}\label{sec:cohomology}

In this section, we study $\Ima(\ddphi)$ in detail which  allows us to find the subspaces of $\Omega^4(M)$ in which to set-up the Hodge decomposition theorem. Throughout this section, $(M^8, \Phi)$ is a Spin(7)-manifold and hence $d\Phi=0$. We will prove our Hodge decomposition result in terms of $\Ima(\ddphi)$ in \Cref{thm:Spin7Hodge}. Using this, we prove the analogue in Spin(7)-geometry of the $\pt\bar{\pt}$-lemma in K\"ahler geometry. Finally, we give some applications of our $\ddphi$-lemma to moduli space of torsion-free Spin(7)-structures and calibrated geometry.  

\medskip

%\subsection{Bott--Chern type cohomology  and the $\ddphi$-lemma}
Given $f\in \Omega^0(M)$, since $\ddphi(f) = d(df\lrcorner \Phi)$, trivially $\Ima(\ddphi)\subset \Ima(d)$. Moreover, from \Cref{table:dtorsionfree} or using the Lie derivative formula \cref{eq:liePhiexp}, we see that
\begin{align}\label{eq:imddphi1}
d(df\lrcorner \Phi) = \cL_{df}\Phi = \frac 12 \cL_{df}g \diamond \Phi= -\frac 12 d^*(df)\Phi + \left(\frac 12 \cL_{df}g \right)_{0}\diamond \Phi = -\frac 12 (\Delta f)\Phi + \left(\frac 12 \cL_{df}g \right)_{0}\diamond \Phi,
\end{align}
where $\left(\frac 12 \cL_{df}g \right)_{0}$ denotes the trace-free part of the symmetric $2$-tensor $\cL_{df}g$. In other words,
\begin{align}
\ddphi f\in \Omega^4_1(M)\oplus \Omega^4_{35}(M). \label{eq:imddphi2}
\end{align}

In fact, \cref{eq:imddphi1} shows that if $M$ is a compact Spin(7)-manifold then by the maximum principle, $\Delta f=0 \implies f=\text{constant} \implies df=0$ and hence
\begin{align}
\text{for\ compact\ Spin(7)-manifolds}\ \ \ \ \pi^4_1(\ddphi f) =0 \iff \pi^4_{35}(\ddphi f)=0. 
\end{align}

We have the following proposition.

\begin{proposition}\label{prop:pi348d*zero}
Let $(M^8, \Phi)$ be a Spin(7)-manifold. If $\gamma\in \Omega^4_1(M)\oplus \Omega^4_{35}(M)$ such that $d\gamma=0$, then $d^*\gamma \in \Omega^3_8(M)$, that is, $\pi^3_{48}(d^*\gamma)=0$. In particular, $\pi^3_{48}d^*(\ddphi f)=0$ for $f\in C^{\infty}(M)$.
\end{proposition}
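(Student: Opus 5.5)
The idea is to use the self-duality properties of the two summands together with \Cref{table:dtorsionfree}. Write $\gamma = f\Phi + \eta_{35}$ with $f\in C^\infty(M)$ and $\eta_{35}\in\Omega^4_{35}(M)$. By \cref{eq:sdasdforms}, $*\Phi=\Phi$ and $*\eta_{35}=-\eta_{35}$, so
\begin{align*}
*\gamma = f\Phi - \eta_{35}.
\end{align*}
On $4$-forms on an $8$-manifold we have $d^*\gamma = -*d*\gamma$, which reduces the claim to determining the type of the $5$-form $d(f\Phi-\eta_{35})$.

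Next I would read off the components from \Cref{table:dtorsionfree}, using $d\Phi=0$. The row for $d(f\Phi)$ shows that $d(f\Phi)=df\wedge\Phi$ lies purely in $\Omega^5_8$; this is consistent with $V_8\otimes V_1=V_8$ in \cref{eq:repdecomp}. The row for $d\eta_{35}$ gives $d\eta_{35} = \dd^{35}_8\eta + \dd^{35}_{48}\eta$, with components in $\Omega^5_8$ and $\Omega^5_{48}$ respectively. Since $\Omega^5=\Omega^5_8\oplus\Omega^5_{48}$ is an orthogonal splitting, the hypothesis $d\gamma=0$ separates into two equations:
\begin{align*}
\dd^1_8 f + \dd^{35}_8\eta = 0, \qquad \dd^{35}_{48}\eta = 0 .
\end{align*}

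Then
\begin{align*}
d*\gamma = \dd^1_8 f - \dd^{35}_8\eta - \dd^{35}_{48}\eta = \dd^1_8 f - \dd^{35}_8\eta ,
\end{align*}
which lies in $\Omega^5_8$. The first equation additionally shows $d*\gamma = 2\,\dd^1_8 f$, although this is not needed. The Hodge star is Spin(7)-equivariant and therefore carries $\Omega^5_8$ isomorphically onto $\Omega^3_8$. Hence $d^*\gamma=-*d*\gamma\in\Omega^3_8$, that is, $\pi^3_{48}(d^*\gamma)=0$.

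For the last assertion, $\ddphi f = d(df\lrcorner\Phi)$ is exact and hence closed. By \cref{eq:imddphi2} it also lies in $\Omega^4_1\oplus\Omega^4_{35}$, so the first part applies directly.

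The main point needing care is the justification that $d(f\Phi)$ has no $\Omega^5_{48}$-component and that $d\eta_{35}$ has no components beyond $\Omega^5_8\oplus\Omega^5_{48}$. Both facts come from the corollary (\Cref{table:dtorsionfree}) in the torsion-free case. If more directness were wanted, one could verify the first via $d(f\Phi)=df\wedge\Phi\in\Omega^5_8$ and the second from $\Omega^5=\Omega^5_8\oplus\Omega^5_{48}$. The remaining ingredients, namely sign conventions for $d^*$ and the equivariance of $*$, are routine.
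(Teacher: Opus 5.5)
Your proof is correct and follows essentially the same route as the paper. Both arguments use $d\gamma=0$ to get $d\eta_{35}=-df\wedge\Phi\in\Omega^5_8$, and then use the anti-self-duality of $\eta_{35}$ to conclude $d^*\gamma\in\Omega^3_8$. The paper makes this explicit by computing $d^*(f\Phi)=d^*\eta_{35}=-df\lrcorner\Phi$, which matches your observation that $d*\gamma=2\,\dd^1_8 f$. Your appeal to \Cref{table:dtorsionfree} is harmless but not really needed, since splitting $d\eta_{35}$ into $\Omega^5_8$ and $\Omega^5_{48}$ components is just the orthogonal decomposition of $\Omega^5$.
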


\begin{proof}
Let $\gamma=h\Phi + \eta \in \Omega^4_{1\oplus 35}(M)$ with $h\in C^{\infty}(M)$ and $\eta\in \Omega^4_{35}$. Since $d\gamma=0$ and the exterior derivative is linear, we get
\begin{align*}
dh\wedge \Phi = -d\eta \implies d\eta \in \Omega^5_8(M).
\end{align*}
Using \cref{eq:sdasdforms}, we have 
\begin{align*}
d^*\eta =-*d*\eta=*d\eta = -dh\lrcorner \Phi \in \Omega^3_8(M).   
\end{align*}
Moreover, $d^*(h\Phi)=-dh\lrcorner \Phi \in \Omega^3_8(M)$. Thus, $d^*\gamma \in \Omega^3_8(M)$ and hence $\pi^3_{48}(d^*\gamma)=0$. The assertion about $\ddphi f$ follows as $\ddphi f\in \Omega^4_{1\oplus 35}(M)$ and $d(\ddphi f)=0$.
\end{proof}

\begin{remark}
The proof above shows that the result still holds if we just assume $T_{48}=0$ instead of the  stronger assumption of $d\Phi=0$. However, the proposition is not true for arbitrary Spin(7)-structures. \demo
\end{remark}

An upshot of \cref{eq:imddphi2} and the previous proposition is that $\Ima(\ddphi)\subset \Omega^4_{1\oplus 35}$, $\Ima(\ddphi)\subset \ker(\pi^3_{48}d^*)$ and $\Ima(\ddphi)\subset \ker(d)$. Also, for $\gamma \in \Ima(\ddphi)$ we have $d\gamma=0 \iff d^*\gamma \in \Omega^3_8(M)$. In other words, in the space $\Omega^4(M)$, we have
\begin{align}\label{eq:imddphichar}
\Ima(\ddphi)\subset \left(\Omega^4_1(M)\oplus \Omega^4_{35}(M) \right)\cap \ker(\pi^3_{48}d^*)\cap \ker(d)=\left(\Omega^4_1(M)\oplus \Omega^4_{35}(M) \right) \cap \ker(d).
\end{align}
We want to understand the complement of the space $\Ima(\ddphi)$ in $\Omega^4(M)$. For this, we would need to understand the adjoint of the operator $\ddphi$. For analyzing the space $\Omega^4_{1\oplus35}$ in terms of the operator $\ddphi$ and its adjoint, we need the following simple proposition.

\begin{proposition}\label{prop:psymbold}
The principal symbol $\sigma(\ddphi)$ of the operator $\ddphi$ is injective.
\end{proposition}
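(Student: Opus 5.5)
The plan is to compute the principal symbol directly from the diamond description of $\ddphi$ established in \Cref{subsec:harveylawsonop}, and then use the injectivity of $A\mapsto A\diamond\Phi$ on symmetric tensors. In the torsion-free case we showed $\ddphi f = (\del\del f)\diamond\Phi$. Since $\ddphi$ is a second order operator $\Omega^0(M)\to\Omega^4(M)$, its principal symbol at a covector $\xi\in T^*_xM$ is obtained by replacing $\del_i\del_j f$ with $\xi_i\xi_j f$. This gives
\begin{align*}
\sigma_\xi(\ddphi)(c) = c\,(\xi\otimes\xi)\diamond\Phi, \qquad c\in\bR .
\end{align*}
For a general Spin(7)-structure, the formula \cref{eq:hlhessiangeneral} differs from this only by the lower-order terms $df\lrcorner d\Phi - \del_{\del f}\Phi$, so the symbol is the same. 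Equivalently, one may compute it straight from $\ddphi f=d(df\lrcorner\Phi)$, which yields $\sigma_\xi(\ddphi)(c)=c\,\xi\wedge(\xi^\sharp\lrcorner\Phi)$.

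Injectivity means that for $\xi\neq0$ the element $(\xi\otimes\xi)\diamond\Phi$ is nonzero. To show this, decompose the symmetric tensor $\xi\otimes\xi$ according to \cref{eq:splitting TM* x TM} as
\begin{align*}
\xi\otimes\xi = \tfrac18|\xi|^2 g + (\xi\otimes\xi)_0 .
\end{align*}
It has no $\Omega^2_{21}$-component, since it is symmetric. By \Cref{subsec:formdecomp}, the kernel of $A\mapsto A\diamond\Phi$ is exactly $\Omega^2_{21}$, while $\Omega^0$ and $S_0$ map isomorphically onto $\Omega^4_1$ and $\Omega^4_{35}$. Hence
\begin{align*}
\pi_1\big((\xi\otimes\xi)\diamond\Phi\big)=\tfrac18|\xi|^2\, g\diamond\Phi = \tfrac12|\xi|^2\Phi ,
\end{align*}
using $g\diamond\Phi = 4\Phi$ from \cref{eq:diadefn2}. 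This is nonzero whenever $\xi\neq 0$. The same conclusion also follows from \cref{eq:imddphi1}, where the $\Omega^4_1$-part of $\ddphi f$ is $-\tfrac12(\Delta f)\Phi$: the symbol of the Laplacian is $|\xi|^2$, which is already injective.

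As an alternative check via the wedge formula, take $\xi=e^1$ and write $\Phi=e^1\wedge\varphi+\psi$. Then $\xi\wedge(\xi^\sharp\lrcorner\Phi)=e^1\wedge\varphi\neq0$.

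No step here is a serious obstacle; the point needing most care is the symbol computation itself. Only second derivatives of $f$ contribute, so the identification $\ddphi f=(\del\del f)\diamond\Phi$ (or its general-structure version) must be used modulo first-order terms. Because the operator maps a rank-one bundle into a bigger one, injectivity is the correct notion. Later, this lets one conclude that $(\ddphi)^*\ddphi$ is elliptic.
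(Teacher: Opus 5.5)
Your proposal is correct and is essentially the paper's argument. The paper writes the symbol in indices as $c\,(\xi\otimes\xi)\diamond\Phi$ and contracts it with $\Phi^{ijkl}$ to get $168\,c|\xi|^2\neq 0$, which is exactly the pairing of your $\Omega^4_1$-component $\tfrac12 c|\xi|^2\Phi$ with $\Phi$ (since $\Phi_{ijkl}\Phi^{ijkl}=336$); the only difference is that you isolate that component via the trace decomposition of $\xi\otimes\xi$ and the $\diamond$-isomorphisms rather than via the contraction identities for $\Phi$.
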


\begin{proof}
Since $\ddphi:\Omega^0(M)\rightarrow \Omega^4_1(M)\oplus \Omega^4_{35}(M)$ is a linear differential operator, hence for $x\in M$ and non-zero covector $\xi\in T^*_xM$, the principal symbol is a homomorphism
\begin{align*}
\sigma(\ddphi)_x(\xi): \bR\rightarrow \Omega^4_1(T^*_xM)\oplus \Omega^4_{35}(T^*_xM).
\end{align*}
Writing the $\ddphi$-operator in terms of the Levi-Civita connection $\del$ and using the Fourier convention of replacing the covariant derivative $\del_j$ by $\xi_j$, we have
\begin{align*}
\left(\sigma(\ddphi)_x(\xi)(c)\right)_{ijkl}&=c(\xi_i\xi^p\Phi_{pjkl}-\xi_j\xi^p\Phi_{pikl}+\xi_k\xi^p\Phi_{pijl}-\xi_l\xi^p\Phi_{pijk}),
\end{align*}
which on contracting both sides by $\Phi^{ijkl}$ and using \cref{eq:impiden4} gives
\begin{align*}
\left(\sigma(\ddphi)_x(\xi)(c)\right)_{ijkl}\Phi^{ijkl}&=c(\xi_i\xi^p\Phi_{pjkl}-\xi_j\xi^p\Phi_{pikl}+\xi_k\xi^p\Phi_{pijl}-\xi_l\xi^p\Phi_{pijk})\Phi^{ijkl}\\
&=168c|\xi|^2.
\end{align*}
Since $\xi\neq 0$, the symbol $\sigma(\ddphi)$ is injective.
\end{proof}

Thus, \Cref{prop:pi348d*zero} and \Cref{prop:psymbold} tells us that the right space to study the cohomological properties of a Spin(7)-manifold associated with the $\ddphi$-operator is the space
\begin{align}\label{eq:cohospace}
K= \left(\Omega^4_1(M)\oplus \Omega^4_{35}(M) \right)\cap \ker(d).
\end{align}

\begin{remark}
A comparison with the analogue of the space $K$ in the Calabi--Yau or the torsion-free $\G2$-case in \cite[Def. 6.10, Def. 5.10]{pacini-raferro-pluripotential} shows  that the correct space in which to set-up the $\ddphi$-cohomology is simpler in the Spin(7)-case and is more akin to Bott--Chern cohomology of K\"ahler manifolds. This happens because the space $\Omega^4_{35}(M)$ is the space of anti-self-dual $4$-forms on a manifold with a Spin(7)-structure.
\end{remark}

%In fact, we can drop the $\ker(\pi^3_{48}d^*)$ term from $K$ without any change in the discussion, see \Cref{rem:differentK}. 

Recall, for instance, from \cite[Appendix]{pacini-raferro-pluripotential} that if $P:\Gamma(E)\rightarrow \Gamma(F)$ is a differential operator of order $m$ between sections of vector bundles $E$ and $F$ and the principal symbol of $P$ is injective then on a compact manifold we have that $P:H^m(E)\rightarrow L^2(F)$ has closed image and there is an $L^2$-orthogonal decomposition
\begin{align*}
L^2(F) = \Ima(P)\oplus \ker (P^*),
\end{align*}
where $P^*$ is the Hilbert-space adjoint of the unbounded operator $P: L^2(E)\rightarrow L^2(F)$. Moreover, the abstract adjoint $P^*$ agrees with the distributional extension of the formal adjoint $P^t$. Also, $H^m(E)$ is the Sobolev space of sections of $E$ with $m$ weak derivatives in $L^2$.

Since we are interested in the $\ddphi$-operator which is an operator of order $2$ and whose principal symbol is injective, we have from general theory of such operators that
\begin{align*}
L^2(\Omega^4_1(M)\oplus \Omega^4_{35}(M))= \Ima(\ddphi) \oplus \ker ((\ddphi)^*),
\end{align*}
where $(\ddphi)^*$ is the Hilbert-space adjoint of the $\ddphi$-operator. We denote the formal adjoint of the $\ddphi$-operator on the space $\Omega^4_{1\oplus 35}(M)$ by $(\ddphi)^t$. Let us compute that.

Let $h\Phi+\eta \in \Omega^4_1(M)\oplus \Omega^4_{35}(M)$. Then
\begin{align*}
\int_M \left\langle \ddphi f, h\Phi + \eta  \right\rangle \vol_{\Phi}&=\int_M \left\langle df\lrcorner \Phi, d^*(h\Phi+\eta)  \right\rangle \vol_{\Phi}\\
&= \int_M \left\langle *(df\wedge \Phi), d^*(h\Phi +\eta) \right\rangle \vol_{\Phi}\\
&=\int_M  (df\wedge \Phi) \wedge d^*(h\Phi+\eta)  \\
&=\int_M d(f\Phi\wedge d^*(h\Phi+\eta))-\int_M f\Phi \wedge dd^*(h\Phi+\eta)\\
&=-\int_M f\Phi \wedge dd^*(h\Phi+\eta).
\end{align*}
We used the facts that $d\Phi=0$ and Stokes's theorem in the above computations. Thus, we get
\begin{align}\label{ddphiadjoint}
(\ddphi)^t: \Omega^4_1(M)\oplus \Omega^4_{35}(M)\rightarrow \Omega^0(M)\ \ \text{is}\ (\ddphi)^t(h\Phi+\eta) = -*(dd^*(h\Phi+\eta)\wedge \Phi).
\end{align}

We now state and prove our main result on the decomposition of the space $K$ in \cref{eq:cohospace}. The proof of the next theorem is inspired by the analogous result of Pacini--Raffero \cite[Thm. 5.12]{pacini-raferro-pluripotential} for $\G2$-manifolds.

\begin{theorem}\label{thm:Spin7Hodge}
Let $(M^8, \Phi)$ be a compact manifold with a torsion-free Spin(7)-structure $\Phi$. Then there exists an $L^2$-orthogonal decomposition of the space $K$ in \cref{eq:cohospace} as
\begin{align}\label{eq:Spin7Hodge}
K=\Ima(\ddphi) \oplus \cH^4_1\oplus \cH^4_{35},
\end{align}
where $\cH^4_1$ and $\cH^4_{35}$ denote the space of harmonic forms in the spaces $\Omega^4_1$ and $\Omega^4_{35}$ respectively.
\end{theorem}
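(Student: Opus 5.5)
The plan is to intersect the Hilbert-space decomposition already obtained, $L^2(\Omega^4_{1\oplus 35}) = \Ima(\ddphi)\oplus\ker((\ddphi)^*)$, with $K$. Since $\Ima(\ddphi)\subset K$ by \cref{eq:imddphichar}, every $\gamma\in K$ splits $L^2$-orthogonally as $\gamma=\ddphi f+\sigma$ with $\sigma\in\ker((\ddphi)^*)$. Moreover $\sigma=\gamma-\ddphi f$ is closed, at least weakly. By elliptic regularity ($\ddphi$ has injective symbol by \Cref{prop:psymbold}, so $(\ddphi)^t\ddphi$ is elliptic of order $4$), $f$ can be taken smooth, and hence $\sigma$ is smooth. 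The theorem therefore reduces to the following claim:
\[
K\cap\ker((\ddphi)^t)=\cH^4_1\oplus\cH^4_{35}.
\]
Orthogonality of the pieces then follows from the orthogonality of the abstract decomposition.

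For the inclusion $\supseteq$, take a harmonic $\gamma\in\Omega^4_{1\oplus35}$. It is closed and coclosed, so $\gamma\in K$, and by \cref{ddphiadjoint} we get $(\ddphi)^t\gamma=-*(dd^*\gamma\wedge\Phi)=0$. Also, since $\nabla\Phi=0$ the type projections commute with $\Delta$. Hence the harmonic forms of type $1\oplus35$ are exactly $\cH^4_1\oplus\cH^4_{35}$, a harmonic form in $\Omega^4_{1\oplus 35}$ having harmonic components.

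The main step is the reverse inclusion. Let $\sigma=h\Phi+\eta\in K$ with $(\ddphi)^t\sigma=0$. The proof of \Cref{prop:pi348d*zero} gives $d\eta=-dh\wedge\Phi$ and $d^*\sigma=-2\,dh\lrcorner\Phi$, using $d^*\eta=*d\eta=-dh\lrcorner\Phi$ and $d^*(h\Phi)=-dh\lrcorner\Phi$. Then
\[
dd^*\sigma=-2\,d(dh\lrcorner\Phi)=-2\,\ddphi h .
\]
By \cref{eq:imddphi1}, $\pi_1(\ddphi h)=-\tfrac12(\Delta h)\Phi$, so $-*(dd^*\sigma\wedge\Phi)$ is a nonzero constant multiple of $\Delta h$; the exact constant needs checking against $\Phi\wedge\Phi=14\,\vol$. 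The condition $(\ddphi)^t\sigma=0$ therefore becomes $\Delta h=0$. On the compact manifold $M$ this forces $h$ to be constant, so $h\Phi$ is parallel and hence harmonic. It follows that $d\eta=0$, and then $d^*\eta=*d\eta=0$, so $\eta\in\cH^4_{35}$.

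I expect the main obstacle to be making the regularity and weak-closedness step rigorous: one has to show that the $L^2$ component $\sigma$ of a smooth $\gamma$ is smooth and closed. I would handle it by solving $(\ddphi)^t\ddphi f=(\ddphi)^t\gamma$. This is possible since $\gamma\perp\ker\ddphi$, which consists of the constants, and it gives a smooth $f$ by elliptic theory. Then $\sigma:=\gamma-\ddphi f$ is smooth, closed, and lies in $\ker((\ddphi)^t)$. Beyond that, the remaining work is tracking the constants in the identity $(\ddphi)^t\sigma=c\,\Delta h$; only the fact that $c\neq0$ matters.
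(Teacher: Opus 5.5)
Your proposal is correct and has the same overall architecture as the paper. Both split off $\Ima(\ddphi)$ using the injective symbol, and both identify $K\cap\ker((\ddphi)^t)$ with $\cH^4_1\oplus\cH^4_{35}$ by reducing $(\ddphi)^t\sigma=0$ to $\Delta h=0$. The two arguments differ in two technical points.

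\emph{Regularity.} The paper works in the Hilbert space $\cA=L^2(\Omega^4_{1\oplus35})\cap\ker d$. It gets smoothness of $\ker((\ddphi)^*|_{\cA})$ from the auxiliary overdetermined operator $A=((\ddphi)^t,d)$, whose symbol it shows is injective using anti-self-duality of $\Omega^4_{35}$. You instead solve the fourth-order elliptic equation $(\ddphi)^t\ddphi f=(\ddphi)^t\gamma$ for each smooth $\gamma\in K$. Then $f$ and $\sigma=\gamma-\ddphi f$ are smooth from the start, and neither $\cA$ nor $A$ is needed. One wording fix: the solvability condition is that $(\ddphi)^t\gamma$, not $\gamma$, is orthogonal to the constants. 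This holds because $\int_M(\ddphi)^t\gamma\,\vol_\Phi=\langle\gamma,\ddphi 1\rangle_{L^2}=0$.

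\emph{Computing $(\ddphi)^t$ on $K$.} The paper uses $dd^*=\Delta$ on closed forms together with type-preservation of $\Delta$. You use the first-order relation $d^*\sigma=-2\,dh\lrcorner\Phi$ from the proof of \Cref{prop:pi348d*zero}. Both give $(\ddphi)^t\sigma=-14\,\Delta h$. This step needs $\Omega^4_{35}\wedge\Phi=0$, which holds since $\Phi=*\Phi$, together with $\Phi\wedge\Phi=14\vol_\Phi$; you use the first fact only implicitly.

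\emph{What each route buys.} Yours is more elementary. Applying your identity to $\sigma=\ddphi f$ gives $(\ddphi)^t\ddphi=7\Delta^2$. So for $\gamma=h\Phi+\eta$, your elliptic problem is just the Poisson equation $\Delta f=-2(h-\bar h)$, where $\bar h$ is the mean of $h$. The paper's route follows the general Pacini--Raffero template, and as a by-product it gives the decomposition of the $L^2$ space $\cA$ as well.
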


\begin{proof}
Since the operator $\ddphi$ has an injective symbol from \Cref{prop:psymbold}, we use the general theory about such operators to find an $L^2$-orthogonal decomposition of the space $K\supseteq \Ima(\ddphi)$. Consider the Hilbert space
\begin{align*}
\cA = L^2(\Omega^4_1(M)\oplus \Omega^4_{35}(M)) \cap \ker d,
\end{align*}
where $d$ is intrepreted distributionally. As we saw in the discussions leading up to \cref{eq:cohospace} that $\Ima(\ddphi)\subset \cA$, hence we may regard
\begin{align*}
\ddphi: L^2(M) \rightarrow \cA,
\end{align*}
with domain, the Sobolev space $H^2(M)$. Thus,
\begin{align}\label{eq:hilbertspaceimage}
\cA = \Ima(\ddphi) \oplus \ker \left((dd^{\Phi})^*\mid_{\cA}\right).
\end{align}

As we are interested in the space $K$, we look at the auxiliary operator 
\begin{align*}
A&: \Omega^4_1\oplus \Omega^4_{35} \rightarrow \Omega^0 \oplus \Omega^5\\
& \alpha \mapsto \left((\ddphi)^t\alpha,\ d\alpha \right).
\end{align*}
Thus, 
\begin{align*}
\ker A =\{ \gamma \in \Omega^4_1\oplus \Omega^4_{35}\ :\ (\ddphi)^t\gamma=0, d\gamma=0\} = \ker\left( (\ddphi)^t\mid_K \right).    \end{align*}
The reason we introduce the auxiliary operator $A$ is to  guarantee smoothness of the $L^2$-forms which will be obtained by proving the injectivity of the principal symbol of the operator $A$.

We prove that the principal symbol of the operator $A$ is injective. Let $p\in M$ and suppose $\xi\in T^*_pM$ be a non-zero covector. We use the same notation $\xi$ for indentifying $\xi$ as both a vector and a covector. Using \cref{ddphiadjoint}, the principal symbol $$\sigma(A)_p(\xi)(\alpha)=\left(-*(\xi\wedge(\xi\lrcorner \alpha)\wedge \Phi), \xi\wedge \alpha \right).$$If $\alpha\in \ker(\sigma(A)_p(\xi))\subset \Omega^4_{1\oplus 35}(T^*_pM)$, then using $\xi\wedge \alpha=0$ we have
\begin{align*}
0=\xi\lrcorner(\xi \wedge \alpha) = |\xi|^2\alpha - \xi\wedge(\xi\lrcorner \alpha),
\end{align*}
which on using the fact that $*(\xi\wedge(\xi\lrcorner \alpha)\wedge \Phi)=0$ gives
\begin{align*}
0&= |\xi|^2*(\alpha \wedge \Phi),
\end{align*}
which on using the fact that for $a\Phi\in \Omega^4_1(M)$, $a\Phi\wedge \Phi=14a\vol_{\Phi}=0$ implies $a=0$, shows $\alpha \in \Omega^4_{35}(M)$. Since $\Omega^4_{-}=\Omega^4_{35}$, we have that $\alpha$ is anti-self-dual, $*\alpha=-\alpha$. Thus, again using $\xi\wedge \alpha=0$ (as $\alpha \in \ker(\sigma(A)_p(\xi)))$ implies
\begin{align*}
0=*(\xi\wedge \alpha)=\xi\lrcorner *\alpha = -\xi \lrcorner \alpha,
\end{align*}
thus proving as before,
\begin{align*}
|\xi|^2\alpha=0\ \implies\ \alpha=0.
\end{align*}
Hence the principal symbol of the auxiliary operator $A$ is injective. By standard analytic theory, $\ker A$ consists of smooth forms and as a result
\begin{align*}
\ker \left((\ddphi)^*\mid_{K} \right)=\ker \left((\ddphi)^t\mid_{K} \right).
\end{align*}

\medskip

\noindent
\begin{claim}\label{claim:Spin7Hodge}
$\ker \left((\ddphi)^t\mid_{K} \right) = \cH^4_1\oplus\cH^4_{35}$.
\end{claim}
If $\eta\in \cH^4_1\oplus\cH^4_{35}$ then in particular, $d^*\eta =0$ and hence $\eta\in \ker \left((\ddphi)^t\mid_{K} \right)$. For the converse, notice that since $K\subset \ker(d)$, hence from \cref{ddphiadjoint}
\begin{align*}
(\ddphi)^t\mid_K = -*((dd^*+d^*d)(\cdot)\wedge \Phi)=-*(\Delta(\cdot) \wedge \Phi),
\end{align*}
and thus
\begin{align*}
\ker \left((\ddphi)^t\mid_{K} \right)=\ker(\Delta(\cdot)\mid_{\Omega^4_{1\oplus 35}}\wedge \Phi)\cap K.
\end{align*}
Since $\Phi$ is torsion-free hence $\Delta(\Omega^4_{i})\subset \Omega^4_i,\ i=1,\ 35$. Let $\gamma=\gamma_1+\gamma_{35}\in \ker(\Delta(\cdot)\mid_{\Omega^4_{1\oplus 35}}\wedge \Phi)\cap K$. If $\gamma_1=a\Phi$ then $\Delta(a\Phi)\wedge \Phi=14(\Delta a)\vol_{\Phi}$ and hence $\gamma_1\in \ker(\Delta(\cdot)\mid_{\Omega^4_{1\oplus 35}}\wedge \Phi)\cap K \implies\ a=\text{constant}$ and thus
\begin{align*}
\ker(\Delta(\cdot)\mid_{\Omega^4_{1\oplus 35}}\wedge \Phi)\cap K = \bR\cdot \Phi \oplus \left(\Omega^4_{35}\cap K \right).
\end{align*}
Again, when $\gamma_{35}\in K \implies d\gamma_{35}=0$ and since $\Omega^4_{35}=\Omega^4_{-}$ hence $d^*\gamma_{35}=0$ which implies that $\gamma_{35}\in \cH^4_{35}$. Thus, $\ker(\Delta(\cdot)\mid_{\Omega^4_{1\oplus 35}}\wedge \Phi)\cap K=\cH^4_1\oplus \cH^4_{35}=\ker((\ddphi)^t\mid_K)$ which proves the claim.

\medskip

Thus, from \cref{eq:hilbertspaceimage} and \Cref{claim:Spin7Hodge}, we have
\begin{align*}
\cA = \Ima(\ddphi) \oplus \cH^4_1\oplus \cH^4_{35}.
\end{align*}
This identification is in the distributional sense and to get the equality for smooth forms, we intersect both sides of the equation with the space $K$ to get
\begin{align*}
K=\Ima(\ddphi) \oplus \cH^4_1\oplus \cH^4_{35},
\end{align*}
which proves \cref{eq:Spin7Hodge}.
\end{proof}

\begin{remark}
In the proof above, we explicitly used the fact that the Laplacian $\Delta$ preserves the type of form which is only true for torsion-free Spin(7)-structures. Any analogue of \Cref{thm:Spin7Hodge} for Spin(7)-structures with torsion will need to incorporate torsion terms as well. \demo    
\end{remark}

\begin{remark}\label{rem:differentK}
As is evident from the proof above, we didn't need to use the fact that $\pi^3_{48}d^*(\ddphi)=0$ either in proving the injectivity of the principal symbol of the auxiliary operator $A$ or in proving \Cref{claim:Spin7Hodge}. Both these assertions simply follow from the fact that $\Omega^4_{35}=\Omega^4_-$ on a manifold with a Spin(7)-structure. This is in contrast to the $\G2$-case as in \cite[Thm. 512]{pacini-raferro-pluripotential}. \demo
\end{remark}

As a corollary of \Cref{thm:Spin7Hodge} we get the analogue of the $\pt\bar{\pt}$-lemma in K\"ahler geometry.

\begin{lemma}[\textbf{Global $\ddphi$-lemma}]\label{lem:Spin7delbarlemma}
Let $(M^8, \Phi)$ be a compact Spin(7)-manifold and let $\gamma \in \Omega^4_1\oplus \Omega^4_{35}$. If $\gamma$ is globally $d$-exact, then $\gamma$ is globally $\ddphi$-exact, that is, $\gamma=\ddphi f$ for some $f\in C^{\infty}(M)$. 
\end{lemma}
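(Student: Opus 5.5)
The plan is to deduce the lemma directly from the Hodge-type decomposition of \Cref{thm:Spin7Hodge}. Let $\gamma\in\Omega^4_1\oplus\Omega^4_{35}$ with $\gamma=d\beta$ for some $\beta\in\Omega^3(M)$. Then $d\gamma=0$, so $\gamma\in K$ as defined in \cref{eq:cohospace}. By \Cref{thm:Spin7Hodge} we can write
\begin{align*}
\gamma=\ddphi f+h,\qquad h\in\cH^4_1\oplus\cH^4_{35},
\end{align*}
for some $f$ in the domain of $\ddphi$. The sum is $L^2$-orthogonal. It remains to show that $h=0$ and that $f$ can be chosen smooth.

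To see that $h$ vanishes, note that $h$ is harmonic, so $d^*h=0$. Since $\gamma=d\beta$ is exact, we get
\begin{align*}
\langle\gamma,h\rangle_{L^2}=\langle\beta,d^*h\rangle_{L^2}=0.
\end{align*}
On the other hand, $\ddphi f=d(df\lrcorner\Phi)$ is $d$-exact, so it is likewise $L^2$-orthogonal to $h$. Therefore
\begin{align*}
\|h\|^2_{L^2}=\langle\gamma-\ddphi f,h\rangle_{L^2}=0,
\end{align*}
and hence $h=0$ and $\gamma=\ddphi f$. This step is just the standard fact that harmonic forms are orthogonal to exact forms on a compact manifold. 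Equivalently, $h$ is a harmonic representative of the trivial de Rham class $[\gamma]=0$, so it must vanish.

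The only real point of care is regularity. The theorem was proved via the Hilbert-space splitting with $f\in H^2(M)$, so we must check that $f$ can be taken in $C^\infty(M)$. Here $\ddphi f=\gamma$ is smooth. Taking the trace part via \cref{eq:imddphi1}, we have $-\tfrac12\Delta f\,\Phi=\pi^4_1(\gamma)$, so $\Delta f$ is smooth. Elliptic regularity for the scalar Laplacian then gives $f\in C^\infty(M)$. Alternatively, one can use that $\ddphi$ has injective symbol (\Cref{prop:psymbold}), so $(\ddphi)^t\ddphi$ is elliptic, and argue the same way.

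We do not expect any genuine obstacle, since the lemma is a formal consequence of the decomposition. The only step requiring attention is this regularity bookkeeping, which ensures that $f$ lies in $C^\infty(M)$ rather than merely in $H^2$. The function $f$ is unique up to an additive constant.
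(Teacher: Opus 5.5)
Your proposal is correct and follows essentially the same route as the paper. You place $\gamma$ in $K$, apply the decomposition $K=\Ima(\ddphi)\oplus\cH^4_1\oplus\cH^4_{35}$, and kill the harmonic component because exact forms are $L^2$-orthogonal to harmonic forms; your extra regularity step, reading off $\Delta f$ from $\pi^4_1(\gamma)=-\tfrac12(\Delta f)\Phi$ and invoking elliptic regularity, makes explicit what the paper handles only implicitly by intersecting the $L^2$ decomposition with the smooth space $K$.
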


\begin{proof}
As $\del \Phi=0$ hence the Laplacian $\Delta$ preserves the type decompositions on $\Omega^4$ and thus, $\cH^4(M)\cap (\Omega^4_1(M)\oplus \Omega^4_{35}(M))=\cH^4_1(M)\oplus \cH^4_{35}(M)$. Since $\gamma \in \Ima(d)\subset \ker(d)$ hence it is closed and thus $\gamma \in K$. It follows from \cref{eq:Spin7Hodge} that $\gamma \in \Ima(\ddphi) \oplus \cH^4_1\oplus \cH^4_{35}$. But since $\Ima(d)$ is orthogonal to the space of harmonic forms, we get that $\gamma \in \Ima(\ddphi)$.
\end{proof}

\begin{remark}
The $\ddphi$-lemma for Spin(7)-manifolds uses the fact that the Spin(7)-structure is torsion-free. For using this type of lemma as in K\"ahler geometry, we need a version of a $\ddphi$-lemma for \emph{arbitrary} Spin(7)-structures which should incorporate the torsion of $\Phi$. This is an interesting question for future. \demo
\end{remark}

Proceeding in the same way as in the case of complex manifolds \cite{bott-chern} or in the case of $\G2$-manifolds \cite[\textsection5.5]{pacini-raferro-pluripotential}, we can define the analogue of Bott--Chern cohomology for Spin(7)-manifolds.

\medskip

\begin{definition}[\textbf{Bott--Chern-type cohomology}]\label{def:bot-chernspin7}
Let $(M^8, \Phi)$ be a compact Spin(7)-manifold. Then the sequence 
\begin{align*}
\Omega^0(M) \xrightarrow{\ddphi} \Omega^4_1(M) \oplus \Omega^4_{35}(M) \xrightarrow{d} \Omega^5(M)
\end{align*}
defines a complex and hence we define the \textbf{Bott--Chern-type $\ddphi$-cohomology space} as
\begin{align}\label{eq:bot-cherncoh}
H^{\Phi}(M) = \cfrac{\ker (\left.d\right|_{\Omega^4_{1\oplus 35}(M)})}{\Ima(\ddphi)} = \cfrac{K}{\Ima(\ddphi)}.
\end{align}
\end{definition}

As a result of \Cref{thm:Spin7Hodge} we get the following
\begin{lemma}\label{lem:Spin7coh}
Given a compact Spin(7)-manifold $(M^8, \Phi)$, the linear map 
\begin{align*}
 H^{\Phi} \rightarrow H^4(M, \bR),\ \ \ [\gamma]_{\Phi} \mapsto [\gamma]   \end{align*}
 is injective and defines an isomorphism
 \begin{align*}
H^{\Phi}(M)\cong \cH^4_1(M) \oplus \cH^4_{35}(M).                    
 \end{align*} \qed\end{lemma}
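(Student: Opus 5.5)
The isomorphism $H^{\Phi}(M)\cong \cH^4_1(M)\oplus\cH^4_{35}(M)$ comes directly from \Cref{thm:Spin7Hodge}. Since $K=\Ima(\ddphi)\oplus\cH^4_1\oplus\cH^4_{35}$ is an $L^2$-orthogonal decomposition, the quotient $K/\Ima(\ddphi)$ is identified with the orthogonal complement of $\Ima(\ddphi)$ in $K$. Concretely, the $L^2$-orthogonal projection of $K$ onto $\cH^4_1\oplus\cH^4_{35}$ is surjective, since harmonic forms in $\Omega^4_{1\oplus35}$ are closed and so lie in $K$. Its kernel is exactly $\Ima(\ddphi)$. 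It therefore descends to a linear isomorphism $[\gamma]_\Phi\mapsto \pi_{\cH}(\gamma)$. All spaces involved are finite-dimensional, being subspaces of $\cH^4(M)$, so no further analysis is needed.

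For the map to de Rham cohomology, I first check that it is well defined. Every $\gamma\in K$ is closed, so $[\gamma]\in H^4(M,\bR)$ makes sense. If $\gamma-\gamma'=\ddphi f=d(df\lrcorner\Phi)$, then the two classes coincide.

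For injectivity, I would first try the direct route. Suppose $\gamma\in K$ and $[\gamma]=0$ in $H^4(M,\bR)$. Then $\gamma$ is $d$-exact and lies in $\Omega^4_1\oplus\Omega^4_{35}$, so the global $\ddphi$-lemma (\Cref{lem:Spin7delbarlemma}) gives $\gamma=\ddphi f$. Hence $[\gamma]_\Phi=0$.

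An alternative route makes the image explicit. Decompose $\gamma=\ddphi f+h$ with $h\in\cH^4_1\oplus\cH^4_{35}$. Then $[\gamma]=[h]$, and a harmonic form representing the zero class must vanish by Hodge theory, because exact forms are $L^2$-orthogonal to harmonic ones. So the map $H^\Phi\to H^4(M,\bR)$ is the composite of the isomorphism above with the inclusion $\cH^4_1\oplus\cH^4_{35}\hookrightarrow\cH^4(M)\cong H^4(M,\bR)$. This shows injectivity and identifies the image as the classes whose harmonic representatives lie in $\Omega^4_{1\oplus35}$.

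There is no real obstacle; the argument is bookkeeping on top of \Cref{thm:Spin7Hodge}. The one point needing care is that a harmonic $4$-form lying in $\Omega^4_{1\oplus35}$ is the same as a harmonic form whose type components are harmonic. This holds because the Laplacian preserves types when $\nabla\Phi=0$, as already used in the proof of \Cref{lem:Spin7delbarlemma}.
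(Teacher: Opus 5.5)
Your proposal is correct and takes the same route as the paper. The paper gives no written proof and simply records the lemma as an immediate consequence of \Cref{thm:Spin7Hodge}. Your argument supplies exactly that omitted bookkeeping: you quotient the decomposition $K=\Ima(\ddphi)\oplus\cH^4_1\oplus\cH^4_{35}$, and you get injectivity either from \Cref{lem:Spin7delbarlemma} or from the fact that a harmonic exact form must vanish.
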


\medskip

If we denote the \emph{refined Betti numbers} $b^k_l=\dim(H^k_l(M, \bR))$, then for a compact Spin(7)-manifold we see that 
\begin{align*}
\dim H^{\Phi}(M) = b^4_1+b^{4}_{35}.
\end{align*}
%If in addition, $M$ has full holonomy Spin(7) 
In fact, it follows from \cite[\textsection 10.6]{joycebook} that
\begin{align*}
\cH^4_1(M)=\bR\langle \Phi \rangle,\ \ \cH^4_{35}(M) = \cH^4_{-}(M),
\end{align*}
where $\cH^4_{-}(M)$ is the space of harmonic anti-self-dual $4$-forms on $M$. Thus, we have the formula
\begin{align*}
\dim(H^{\Phi}(M)) = 1+b^4_{-}(M)=1+ \cfrac{b^4(M)-\sigma(M)}{2},
%\text{for\ }M\ \text{with\ full\ holonomy},
\end{align*}
where $\sigma(M)$ is the signature of $M$.

\medskip

For example, if we take the flat torus $\mathbb{T}^8$ with the standard torsion-free Spin(7)-structure $\Phi_0$, then since all constant forms are harmonic, we get that
\begin{align*}
b^4(\mathbb{T}^8) = 70
\end{align*}
and 
\begin{align*}
\dim H^{\Phi_0}(\mathbb{T}^8) = \dim \cH^4_1(\mathbb{T}^8)+\dim \cH^4_{35}(\mathbb{T}^8) = 1+35=36.
\end{align*}
This simple example shows that the $\ddphi$-cohomology is a Spin(7)-refinement of the de Rham cohomology. In a similar manner, we can explcitly compute the dimension of the $\ddphi$-cohomology space if we have the information of the Betti numbers. We again consider an explicit example. 

\medskip

Consider the Spin(7)-manifold $(M^8, \Phi)$ with full holonomy Spin(7) from \cite[Table 14.1, p.380]{joycebook} whose Betti numbers are $(b^2, b^3, b^4)=(12, 16, 150)$. From \cite[Thm. 10.6.1]{joycebook} we have the formula
\begin{align*}
24\hat{A}(M) = -1+b^1-b^2+b^3+b^4_{+}-2b^4_{-},
\end{align*}
where $\hat{A}(M)$ is the $A$-hat genus of $M$. Since for a holonomy Spin(7) $M$, $\hat{A}(M)=1$, $b^1=0$ and $b^4_+ = b^4-b^4_{-}$, we get
\begin{align*}
b^4_{-}=\cfrac{b^4-b^2+b^3-25}{3},    
\end{align*}
and thus for the example considered here, we have
\begin{align*}
b^4_{-} = 43.
\end{align*}
We therefore get 
\begin{align*}
\dim H^{\Phi}(M) = 1+43=44.
\end{align*}
Similarly, we can explicitly compute the dimension of the $\ddphi$-cohomology provided we have information about the Betti numbers of the manifold. In fact, if the torsion-free Spin(7)-manifold is of the form $M=N\times S^1$, where $N$ is a compact torsion-free $\G2$-manifold, then we can relate the information about $H^{\Phi}(M)$ in terms of corresponding Bott--Chern-type cohomology space for $N$. To describe this, we have the following proposition.

\begin{proposition}
Let $(N^7,\g2)$ be a compact torsion-free $\G2$-manifold and let $\psi=*_\varphi\varphi$. Consider the $8$-manifold $M=N\times S^1$ with $\Phi=dt\wedge\g2+\psi$. If $f\in C^\infty(N\times S^1)$ and $t$ is the coordinate on $S^1$, we write
\begin{align*}
\nabla^M f=X+f_t\partial_t,\qquad X=\nabla^N(f(\,\cdot\,,t)).
\end{align*}
Then
$d^\Phi f= f_t\varphi-dt\wedge(X\lrcorner\varphi)+X\lrcorner\psi$ and
\begin{align*}
\ddphi f= d_Nf_t\wedge\varphi+d_N(X\lrcorner\psi)+ dt\wedge \left(f_{tt}\varphi
+d_N(X\lrcorner\varphi) +\partial_t(X\lrcorner\psi)\right).
\end{align*}
\end{proposition}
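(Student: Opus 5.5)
The plan is to compute directly from the definitions $\dphi f = df\lrcorner\Phi$ (with $df$ identified with $\nabla^M f$ via the product metric $g_M = dt^2 + g_N$) and $\ddphi f = d(\dphi f)$. We use that $\Phi = dt\wedge\g2+\psi$ is the standard torsion-free Spin(7)-structure on $N\times S^1$ and that the metric is the product one, so $\nabla^M f = X + f_t\partial_t$ with $X=\nabla^N(f(\cdot,t))$ a $t$-dependent vector field tangent to $N$. Sign and orientation conventions are fixed by $\Phi=dt\wedge\g2+\psi$; we should check these against the convention $*\Phi=\Phi$ used in the paper, but the computation below does not depend on anything beyond this expression.

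\textbf{First identity.} We compute the interior products separately.
\begin{itemize}
\item Since $\partial_t\lrcorner dt=1$ and $\partial_t\lrcorner\g2=\partial_t\lrcorner\psi=0$, we get $\partial_t\lrcorner\Phi=\g2$.
\item Since $X$ is tangent to $N$, $X\lrcorner dt=0$. Moving the interior product past the $1$-form $dt$ gives $X\lrcorner(dt\wedge\g2)=-dt\wedge(X\lrcorner\g2)$.
\end{itemize}
Summing, $\dphi f = f_t\g2 - dt\wedge(X\lrcorner\g2)+X\lrcorner\psi$, which is the first formula.

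\textbf{Second identity.} Split the exterior derivative on $M$ as $d = d_N + dt\wedge\partial_t$, acting on forms with no $dt$ leg (with $t$-dependent coefficients). Then differentiate term by term.
\begin{itemize}
\item Using $d_N\g2=0$ and $\partial_t\g2=0$, we get $d(f_t\g2) = d_Nf_t\wedge\g2 + dt\wedge f_{tt}\g2$.
\item Since $d(dt)=0$, $d(-dt\wedge(X\lrcorner\g2)) = dt\wedge d(X\lrcorner\g2) = dt\wedge d_N(X\lrcorner\g2)$. Here the $dt\wedge\partial_t$ part of $d$ dies because $dt\wedge dt=0$.
\item Finally, $d(X\lrcorner\psi)=d_N(X\lrcorner\psi)+dt\wedge\partial_t(X\lrcorner\psi)$.
\end{itemize}
Collecting the terms with and without $dt$ gives exactly the stated expression for $\ddphi f$. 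One could further simplify using $d_N\psi=0$, so that $d_N(X\lrcorner\psi)=\cL_X\psi$, and $\partial_t(X\lrcorner\psi)=(\partial_tX)\lrcorner\psi = (\nabla^N f_t)\lrcorner\psi$. This is not required for the statement.

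\textbf{Main obstacle.} There is no real analytic difficulty; the step needing care is the sign bookkeeping. This means the sign from passing the interior product past $dt$, the sign from passing $d$ past $dt$, and verifying that the splitting $d=d_N+dt\wedge\partial_t$ applies to forms whose coefficients depend on $t$. We would double-check these signs on the first formula by confirming $\dphi f\in\Omega^3_8$. The $dt$-free part of $\ddphi f$ should also match the decomposition of $\ddphi f$ into $\Omega^4_1\oplus\Omega^4_{35}$ from \cref{eq:imddphi1}, which serves as a consistency check.
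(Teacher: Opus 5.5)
Your proposal is correct and follows essentially the same route as the paper: compute $\nabla^M f\lrcorner\Phi$ termwise using $X\lrcorner dt=0$ and $\partial_t\lrcorner(dt\wedge\varphi)=\varphi$, then apply $d=d_N+dt\wedge\partial_t$ together with $d_N\varphi=0$ and $\partial_t\varphi=0$. Your sign bookkeeping, $d(-dt\wedge\beta)=dt\wedge d_N\beta$, matches the stated formula. You also correctly observe that $d_N\psi=0$ is only needed for your optional simplification, not for the identity as stated.
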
 

\begin{proof}
Using $\Phi=dt\wedge\varphi+\psi$ and $\nabla^Mf=X+f_t\partial_t$, we get
\begin{align*}
(\nabla^Mf)\lrcorner\Phi &= X\lrcorner(dt\wedge\varphi) +X\lrcorner\psi +f_t\partial_t\lrcorner(dt\wedge\varphi)\\
&= -dt\wedge(X\lrcorner\varphi) +X\lrcorner\psi +f_t\varphi.
\end{align*}
Hence, using the fact that $d_M=d_N+dt\wedge\partial_t$, and the torsion-freeness of $(N, \varphi)$, $d_N\varphi=d_N\psi=0$, gives the stated expression. 
\end{proof}

Thus, we see that if $f$ is independent of $t$, then
\begin{align*}
\ddphi f = dt\wedge d_N\bigl((\nabla^Nf)\lrcorner\varphi\bigr)
+ d_N\bigl((\nabla^Nf)\lrcorner\psi\bigr).
\end{align*}
In other words, if we write $d^\varphi f=(\nabla^Nf)\lrcorner\varphi$ and $d^\psi f=(\nabla^Nf)\lrcorner\psi$, then
\begin{align*}
dd^\Phi f=dt\wedge dd^\varphi f+dd^\psi f.
\end{align*}
This formula gives a direct relation between the Spin(7) $\ddphi$-operator and the associated Harvey–Lawson operators $dd^\varphi, dd^\psi$ on the underlying $\G2$-manifold and we get the following corollary.

\begin{corollary}
There is a natural isomorphism
\begin{align*}
H^\Phi(N\times S^1) \cong \mathbb R[\Phi]\oplus H^3(N,\mathbb R)
\end{align*}
given on harmonic representatives by
\begin{align*}
(a,[\beta]) \longmapsto a[\Phi]+[dt\wedge\beta-*_7\beta].
\end{align*}
In particular,
\begin{align*}
\dim H^\Phi(N\times S^1)=1+b_3(N).
\end{align*}
\qed
\end{corollary}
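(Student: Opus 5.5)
By \Cref{lem:Spin7coh}, $H^{\Phi}(M)\cong \cH^4_1(M)\oplus\cH^4_{35}(M)$, with each class represented by a unique harmonic form. As recalled after that lemma, $\cH^4_1=\bR\langle\Phi\rangle$ and $\cH^4_{35}=\cH^4_-(M)$. So the plan is to identify the harmonic anti-self-dual $4$-forms on $M=N\times S^1$ with $H^3(N,\bR)$, using the product metric $g_N+dt^2$ and the orientation $dt\wedge\vol_N$ induced by $\Phi=dt\wedge\varphi+\psi$. The proposition computing $\ddphi$ on $N\times S^1$ is not needed for this route. It only confirms that $\Ima(\ddphi)$ sits inside the exact forms, which \Cref{thm:Spin7Hodge} already guarantees.

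First, I will describe the harmonic $4$-forms on $M$. The metric is a Riemannian product with a flat circle factor, and both factors are compact. The Künneth theorem for harmonic forms, obtained by separating variables and using Fourier modes in $t$, shows that every harmonic $4$-form on $M$ can be written uniquely as $\gamma+dt\wedge\beta$. Here $\gamma$ is a harmonic $4$-form on $N$ and $\beta$ is a harmonic $3$-form on $N$, both independent of $t$. The reason is that nonconstant modes $e^{ikt}$ contribute $k^2>0$ to the Laplacian, so they cannot occur in a harmonic form.

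Second, I will compute the Hodge star on such forms. With $\vol_M=dt\wedge\vol_N$, the identity $(dt\wedge\beta)\wedge *_7\beta=|\beta|^2\,dt\wedge\vol_N$ gives $*_M(dt\wedge\beta)=*_7\beta$. Since $*_M^2=1$ on $4$-forms in dimension $8$, it follows that $*_M(*_7\beta)=dt\wedge\beta$. Now write $\gamma=*_7\beta'$ for a harmonic $3$-form $\beta'$; this is possible because $*_7$ maps harmonic forms to harmonic forms. Then $\gamma+dt\wedge\beta$ is anti-self-dual if and only if $\beta'=-\beta$. Hence
\begin{align*}
\cH^4_-(M)=\{dt\wedge\beta-*_7\beta\ :\ \beta\in\cH^3(N)\}\cong H^3(N,\bR).
\end{align*}
The same computation shows that $\Phi=dt\wedge\varphi+\psi$ is self-dual, so it is consistent with the convention $\Omega^4_{35}=\Omega^4_-$ of \cref{eq:sdasdforms}.

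Combining the two steps with \Cref{lem:Spin7coh} and Hodge theory on $N$ yields the map $(a,[\beta])\mapsto a[\Phi]+[dt\wedge\beta-*_7\beta]$ and the formula $\dim H^\Phi=1+b_3(N)$. The map is natural because harmonic representatives are canonical.

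The main obstacle I expect is the careful justification of the harmonic Künneth decomposition, specifically ruling out any $t$-dependence. Along with it, I need to check the orientation and sign conventions, so that the anti-self-dual combination really is $dt\wedge\beta-*_7\beta$ rather than $dt\wedge\beta+*_7\beta$. For the first point, I would expand in Fourier series in $t$ and use that $\Delta_M=\Delta_N-\partial_t^2$ on the product.
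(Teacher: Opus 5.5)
Your proof is correct, and it is essentially the argument the paper leaves implicit (no proof is written out). It combines \Cref{lem:Spin7coh} with $\cH^4_1=\bR\langle\Phi\rangle$ and $\cH^4_{35}=\cH^4_-(M)$, and then identifies the harmonic anti-self-dual $4$-forms on the product as $dt\wedge\beta-*_7\beta$ with $\beta\in\cH^3(N)$; this is what ``given on harmonic representatives'' refers to. The paper places the corollary right after the product formula $\ddphi f=dt\wedge dd^\varphi f+dd^\psi f$, but, as you say, that formula plays no logical role, and your orientation and sign checks ($\vol_M=dt\wedge\vol_N$, $*_M(dt\wedge\beta)=*_7\beta$, self-duality of $\Phi$) are correct.
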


{\textbf{Local $\ddphi$-lemma is false.}} It is a natural question that whether a \emph{local} $\ddphi$-lemma is true for compact Spin(7)-manifolds. A local $dd^\Phi$-lemma does not hold, in general, on compact torsion-free Spin(7)-manifolds. More precisely, suppose that $(M^8,\Phi)$ is compact and non-flat. We claim that there is a point $p\in M$ such that $\Phi$ is not $dd^\Phi$-exact on any neighbourhood of $p$. Indeed, since $\Phi$ is torsion-free, we know that
\begin{align*}
dd^\Phi f=(\operatorname{Hess}^g f)\diamond\Phi.
\end{align*}
Since the map
\begin{align*}
S^2 \longrightarrow\Omega^4_1\oplus\Omega^4_{35},
	\qquad A\longmapsto A\diamond\Phi,
\end{align*}
is injective, and $g\diamond\Phi=4\Phi$, we get that
\begin{align*}
dd^\Phi f=\Phi \quad\Longleftrightarrow\quad
	\operatorname{Hess}^g f=\frac14 g.
\end{align*}
Suppose that such a potential exists locally near every point. On each
such neighbourhood set $X=4\nabla f$. Then
\begin{align*}
\nabla X={\rm{Id}},\qquad \mathcal L_Xg=2g.
\end{align*}
If $\Theta_t$ is the local flow of $X$, then $\Theta_t^*g=e^{2t}g$. Consequently, for $u=|{\rm{Rm}}(g)|_g^2$,
\begin{align*}
\Theta_t^*u=e^{-4t}u, \qquad\text{and hence}\qquad X(u)=-4u.
\end{align*}
Choose a point $p$ where $u$ attains its maximum, which exists as $M$ is compact. Since $du_p=0$, the last identity gives $u(p)=0$. It follows that $u\equiv0$, contradicting the non-flatness of $g$.
	
Thus $\Phi$ is not locally $dd^\Phi$-exact near some point. On a sufficiently small contractible neighbourhood it is nevertheless
$d$-exact, by the Poincaré lemma, and therefore gives a counterexample
to a local $dd^\Phi$-lemma.

\subsection{Cayley-positive cones and Spin(7)-potentials}\label{subsec:cayleypositive}

Using our $\ddphi$-lemma, we provide natural analogues of the notions of K\"ahler cones and K\"ahler potentials in Spin(7)-geometry. We first start with some corollaries of the $\ddphi$-lemma.

\begin{corollary}
Let $(M^8, \Phi)$ be a compact Spin(7)-manifold and let $\gamma\in \Omega^4_{1\oplus 35}(M)$ be closed. Then the equation
\begin{align}
\ddphi f=\gamma
\end{align}
admits a smooth solution if and only if the de Rham class $[\gamma]\in H^4(M, \bR)$ vanishes. Thus, the $\ddphi$-cohomology is the obstruction for solving the Harvey--Lawson type Hessian equation.
\end{corollary}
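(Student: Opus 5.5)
The plan is to prove the two implications separately; the ``if'' direction reduces directly to the Global $\ddphi$-lemma (\Cref{lem:Spin7delbarlemma}).

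For the ``only if'' direction, suppose $\ddphi f=\gamma$ for some smooth $f$. By definition $\ddphi f = d(df\lrcorner\Phi)$, and $df\lrcorner\Phi$ is a smooth global $3$-form on $M$. Hence $\gamma$ is $d$-exact and $[\gamma]=0$ in $H^4(M,\bR)$. This direction uses neither compactness nor torsion-freeness.

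For the ``if'' direction, assume $\gamma\in\Omega^4_{1\oplus 35}(M)$ is closed with $[\gamma]=0$. By de Rham's theorem, $\gamma=d\beta$ for some smooth $\beta\in\Omega^3(M)$. So $\gamma$ is globally $d$-exact and of type $1\oplus 35$. The Global $\ddphi$-lemma (\Cref{lem:Spin7delbarlemma}) then gives $f\in C^\infty(M)$ with $\gamma=\ddphi f$.

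The only point needing care is the smoothness of $f$, that is, that the potential produced by the Hodge decomposition \Cref{thm:Spin7Hodge} is smooth rather than merely $H^2$. To see this, apply the decomposition \cref{eq:Spin7Hodge} in the smooth space $K$. Write $\gamma=\ddphi f$ with $f\in H^2$ and $\gamma$ smooth. Contracting with $\Phi$ as in \cref{eq:imddphi1} gives $\pi^4_1(\ddphi f)=-\tfrac12(\Delta f)\Phi$. Thus $\Delta f$ is smooth, and elliptic regularity makes $f$ smooth.

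Finally, the interpretive statement follows from these two directions together with \Cref{lem:Spin7coh}. The map $H^\Phi(M)\to H^4(M,\bR)$ is injective, so the class $[\gamma]_\Phi$ vanishes exactly when $[\gamma]$ vanishes. Hence the $\ddphi$-cohomology is precisely the obstruction to solving $\ddphi f=\gamma$.
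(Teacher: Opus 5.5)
Your proposal is correct and follows the paper's proof. The ``only if'' direction is the exactness of $\ddphi f = d(df\lrcorner\Phi)$, and the ``if'' direction applies the global $\ddphi$-lemma (\Cref{lem:Spin7delbarlemma}) to $\gamma$, which is exact because it is closed with vanishing class; your extra regularity step via $\pi^4_1(\ddphi f)=-\tfrac12(\Delta f)\Phi$ and elliptic regularity is not needed once the lemma is quoted with $f\in C^\infty(M)$, but it correctly supplies the smoothness detail that the proof of \Cref{thm:Spin7Hodge} passes over quickly.
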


\begin{proof}
Suppose $[\gamma]=0$ in $H^4(M, \bR)$. The hypothesis $d\gamma=0$ implies that $\gamma$ is exact hence from \Cref{lem:Spin7delbarlemma} we see that $\gamma\in \Ima(\ddphi)$ and hence there exists a smooth function $f$ on $M$ such that $\gamma=\ddphi f$. Conversely, if $\gamma=\ddphi f$ for some $f\in C^{\infty}(M)$ then $\gamma$ is exact and it being closed as well implies that $[\gamma]=0\in H^4(M, \bR)$.
\end{proof}

Since on a compact Spin(7)-manifold we have $dd^\Phi f =d\bigl((\nabla f)\lrcorner\Phi\bigr) \stackrel{(\ref{eq:imddphi1})}{=} \cL_{\del f} \Phi$, we see that $dd^\Phi$-exact variations are precisely infinitesimal variations of $\Phi$ generated by gradient vector fields. For a compact torsion-free Spin(7)-manifold, the infinitesimal moduli space modulo diffeomorphisms isotopic to the identity is (cf. \cite[Thm. 10.7.1]{joycebook}
\begin{align*}
T_{[\Phi]}\mathcal M_{\operatorname{Spin}(7)} \cong
\mathcal H^4_1\oplus\mathcal H^4_7\oplus\mathcal H^4_{35}.
\end{align*}
and since $H^\Phi(M) \cong \mathcal H^4_1\oplus\mathcal H^4_{35}$, there is a natural exact sequence
\begin{align*}
0\longrightarrow H^\Phi(M) \longrightarrow T_{[\Phi]}\mathcal M_{\operatorname{Spin}(7)}\longrightarrow \mathcal H^4_7(M)\longrightarrow 0.
\end{align*}
Thus $H^\Phi(M)$ captures all infinitesimal torsion-free Spin(7)-deformations except the $\Omega^4_7$-directions. Since for compact full holonomy Spin(7)-manifolds we have
$\cH^4_7(M)=\{0\}$, we get the following corollary.

\begin{corollary}
Let $(M^8, \Phi)$ be a compact Spin(7)-manifold with full holonomy Spin(7). Then
\begin{align*}
H^{\Phi}(M) \cong T_{[\Phi]}\cM_{\text{Spin(7)}},
\end{align*}
where the right-hand-side is the the tangent space at $[\Phi]$ of the moduli space of torsion-free Spin(7)-structures modulo diffeomorphisms isotopic to the identity. 
\end{corollary}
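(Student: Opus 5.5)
The plan is to combine the exact sequence displayed just before the statement,
\begin{align*}
0\longrightarrow H^\Phi(M) \longrightarrow T_{[\Phi]}\mathcal M_{\operatorname{Spin}(7)}\longrightarrow \mathcal H^4_7(M)\longrightarrow 0,
\end{align*}
with the vanishing of $\cH^4_7(M)$ when the holonomy is all of Spin(7).

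\textbf{Step 1 (the isomorphism with harmonic forms).} By \Cref{lem:Spin7coh}, which follows from \Cref{thm:Spin7Hodge}, we have $H^\Phi(M)\cong \cH^4_1(M)\oplus\cH^4_{35}(M)$. The map sends a class $[\gamma]_\Phi$ to its unique harmonic representative $\gamma_h$ in $K$. This representative exists because $\gamma-\gamma_h\in\Ima(\ddphi)$ by the decomposition \cref{eq:Spin7Hodge}.

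\textbf{Step 2 (the moduli space).} We invoke Joyce's description \cite[Thm. 10.7.1]{joycebook}: $T_{[\Phi]}\cM_{\mathrm{Spin}(7)}\cong \cH^4_1\oplus\cH^4_7\oplus\cH^4_{35}$. Concretely, this space consists of closed infinitesimal deformations $\dot\Phi\in\Omega^4_{1\oplus7\oplus35}$ modulo Lie derivatives $\cL_X\Phi$, and each class has a unique harmonic representative. We then check that the inclusion $\cH^4_1\oplus\cH^4_{35}\hookrightarrow T_{[\Phi]}\cM$ is compatible with the map from $H^\Phi(M)$. This holds because $\ddphi f=\cL_{\del f}\Phi$ by \cref{eq:imddphi1}, so $\ddphi$-exact classes are infinitesimal diffeomorphism directions and are killed in the moduli tangent space. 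Hence the map $H^\Phi\to T_{[\Phi]}\cM$ is well defined and injective, and its cokernel is $\cH^4_7$.

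\textbf{Step 3 (vanishing of $\cH^4_7$).} This is the only substantive input. Since $\Delta$ preserves types, $\cH^4_7\cong\cH^2_7$ via the isomorphism $\Omega^2_7\to\Omega^4_7$, $A\mapsto A\diamond\Phi$, which commutes with $\Delta$ in the torsion-free case. On $\Omega^2_7$ we have $\Delta=\del^*\del$, as recorded after \Cref{table:d3}. So by integration by parts every harmonic $\beta\in\Omega^2_7$ is parallel. A nonzero parallel form in $\Omega^2_7$ would be fixed by the holonomy group. If $\mathrm{Hol}=\mathrm{Spin}(7)$, it would then span a trivial Spin(7)-subrepresentation of the irreducible $7$-dimensional representation $V_7$, which is impossible. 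Therefore $\cH^2_7=0$, so $\cH^4_7=0$, and the exact sequence collapses to the claimed isomorphism.

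The main obstacle is making Step 2 rigorous, namely identifying the slice and harmonic-representative description of $T_{[\Phi]}\cM$ with the quotient $K/\Ima(\ddphi)$. I would rely on Joyce's theorem rather than reprove it, and only verify that $\ddphi$-exact forms correspond to gradient directions inside the diffeomorphism orbit.
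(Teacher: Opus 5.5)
Your proposal is correct and matches the paper's argument: the corollary is read off from the exact sequence $0\to H^{\Phi}(M)\to T_{[\Phi]}\cM_{\mathrm{Spin}(7)}\to \cH^4_7(M)\to 0$, which comes from Joyce's description of the moduli tangent space together with $H^{\Phi}(M)\cong\cH^4_1\oplus\cH^4_{35}$, combined with $\cH^4_7(M)=0$ for full holonomy. The only difference is that the paper simply asserts this vanishing (Joyce's $b^4_7=0$), whereas your Step 3 proves it via $\cH^4_7\cong\cH^2_7$, $\Delta=\nabla^*\nabla$ on $\Omega^2_7$, and the Bochner/holonomy argument; that argument is valid.
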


We now see an application of $\ddphi$-cohomology to calibrated geometry. Recall that a $4$-dimensional submanifold $C^4\subset M^8$ is called \textbf{Cayley} is it is calibrated by the $4$-form $\Phi$. Thus,
\begin{align*}
\Phi\mid_C = \vol_C.
\end{align*}

\begin{corollary}
Suppose $(M^8, \Phi)$ is a compact Spin(7)-manifold and $C^4\subset M^8$ is a compact, oriented Cayley submanifold without boundary in $M$. The map
\begin{align*}
P_C: H^{\Phi}(M) \rightarrow \bR,\ \ P_C([\gamma]) = \int_C \gamma,
\end{align*}
is a well-defined linear functional on $H^{\Phi}(M)$. Thus, the Cayley period map $P_C$ factors through the $\ddphi$-cohomology space $H^{\Phi}(M)$. 
\end{corollary}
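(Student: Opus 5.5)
The plan is to show two things: that $P_C$ does not depend on the representative of a class in $H^{\Phi}(M)=K/\Ima(\ddphi)$, and that it is linear. Linearity is immediate, because integration over $C$ is linear in $\gamma$ and the quotient map is linear. So the real content is well-definedness. Concretely, if $[\gamma]_\Phi=[\gamma']_\Phi$ with $\gamma,\gamma'\in K$, then by \cref{eq:bot-cherncoh} we have $\gamma-\gamma'=\ddphi f$ for some $f\in C^\infty(M)$. It therefore suffices to show that
\begin{align*}
\int_C \ddphi f = 0 \qquad\text{for all } f\in C^\infty(M).
\end{align*}

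For this I would use the definition \cref{eq:ddphiop} directly. It gives $\ddphi f = d(df\lrcorner\Phi)$, where $df\lrcorner\Phi$ is a globally defined smooth $3$-form on $M$. Restricting to $C$ gives
\begin{align*}
\int_C \ddphi f = \int_C d\bigl((df\lrcorner\Phi)|_C\bigr) = \int_{\partial C}(df\lrcorner\Phi)|_{\partial C}=0,
\end{align*}
by Stokes's theorem, since $C$ is compact, oriented and has no boundary. Note that the Cayley condition is not needed for this step; only closedness of $C$ is used.

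To make the factorization explicit, I would add the remark that every $\gamma\in K$ is closed. Hence $\int_C\gamma$ depends only on the de Rham class $[\gamma]\in H^4(M,\bR)$, again by Stokes. This means $P_C$ is the composition of the injective map $H^{\Phi}(M)\to H^4(M,\bR)$ from \Cref{lem:Spin7coh} with the homological pairing against $[C]$. That already gives well-definedness a second way, since $\Ima(\ddphi)\subset\Ima(d)$.

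I expect no genuine obstacle here. The only point needing care is that $df\lrcorner\Phi$ must be a global form, so that Stokes applies on all of $C$, and it is. The Cayley hypothesis only matters for interpretation: for the class $[\Phi]$ one gets $P_C([\Phi])=\mathrm{vol}(C)>0$, which is what motivates the positivity notions introduced later.
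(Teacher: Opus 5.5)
Your proposal is correct and follows essentially the same route as the paper. In both, two representatives of the same class in $H^{\Phi}(M)$ differ by $\ddphi f = d(df\lrcorner\Phi)$, and its integral over the closed submanifold $C$ vanishes by Stokes's theorem. Your extra remarks are accurate but not needed: that $P_C$ also factors through de Rham cohomology, and that the Cayley condition plays no role in well-definedness.
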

\begin{proof}
If $[\gamma]=[\gamma']\in H^{\Phi}(M)$ then there exists $f\in C^{\infty}(M)$ such that $\gamma'=\gamma + \ddphi f$. Integrating over closed $C$ and using Stokes's theorem implies $\int_C \gamma'=\int_C\gamma$ and hence the map $P_C$ is well-defined.
\end{proof}
In particular, if $\int_C \gamma \neq 0$ for some Cayley cycle $C$, then $\gamma$ cannot be globally $\ddphi$-exact.

\medskip

Given that the $\ddphi$-cohomology is an analogue of the Bott--Chern cohomology in K\"ahler geometry, we expect it to define \emph{Spin(7)-cones} in the same way as K\"ahler cones are defined. Up to the knowledge of the authors, such notions have not been considered before for Spin(7)-geometry. We provide this below using our $\ddphi$-cohomology. Some of these are implicit in the seminal work of Harvey--Lawson \cite{HL-intropotential}. We first make the following

\begin{definition}\label{def:cayleypositive}
Let $(M^8, \Phi)$ be a Spin(7)-manifold. 
%and let $P^4\subset M^8$ be a Cayley submanifold. 
A $4$-form $\gamma\in \Omega^4(M)$ is called \textbf{Cayley-positive} if for every $x\in M$ and every oriented Cayley $4$-plane $P\subset T_xM$, we have
\begin{align}\label{eq:cayleypositiveeqn}
\gamma_x(P)\geq 0.    
\end{align}
It is called \textbf{strictly Cayley-positive} if $\gamma_x(P)>0$.
\end{definition}

The following local result was proved in \cite[Corr. 2.5]{HL-intropotential}. If $(M^8, \Phi)$ is a Spin(7)-manifold then for any $f\in C^{\infty}(M)$ and Cayley submanifold $C^4\subset M^8$ we have
\begin{align}\label{eq:hlresult}
(\ddphi f)\mid_C = -\Delta_C(f\mid_C)\vol_C.
\end{align}
Thus, if we define \textbf{$\Phi$-plurisubharmonic functions} ($\Phi$-psh) to be those $f\in C^{\infty}(M)$ such that the $4$-form $\ddphi f$ is Cayley-positive in the sense of \Cref{def:cayleypositive}, then as showed in \cite{HL-intropotential}, a $\Phi$-psh function on a compact Spin(7)-manifold must be constant. This follows from \cref{eq:hlresult} and the maximum principle. The next result uses the $\ddphi$-cohomology and can be considered as an analogue of the fact that there are no non-zero exact, positive $(1,1)$-forms on a compact K\"ahler manifold.

\begin{lemma}\label{lem:caypos}
Let $(M^8, \Phi)$ be a compact Spin(7)-manifold. Let $\gamma\in \Omega^4_{1\oplus 35}(M)$ be a closed and Cayley-positive $4$-form. If $\gamma$ is de Rham exact then $\gamma=0$.
\end{lemma}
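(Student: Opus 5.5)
By the global $\ddphi$-lemma (\Cref{lem:Spin7delbarlemma}), a closed, de Rham exact form $\gamma\in\Omega^4_{1\oplus 35}(M)$ is $\ddphi$-exact. So we may write $\gamma=\ddphi f$ for some $f\in C^\infty(M)$. Cayley-positivity of $\gamma$ then says exactly that $f$ is $\Phi$-plurisubharmonic. The plan is to show that $f$ is constant, which forces $\gamma=\ddphi f=0$.

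There are two routes to constancy; I would use the second, since it avoids needing Cayley submanifolds through every point.

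\emph{First route (via Cayley submanifolds).} One could quote the consequence of \cref{eq:hlresult} recalled above: $\Phi$-psh functions on a compact Spin(7)-manifold are constant. This follows from the maximum principle applied on Cayley submanifolds through a maximum point of $f$. However, it requires local existence of Cayley submanifolds tangent to any given Cayley plane, which holds by real-analyticity (Harvey--Lawson), so this route is citable but less self-contained.

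\emph{Second route (pointwise averaging).} Write, by torsion-freeness,
\[
\gamma=\ddphi f=(\operatorname{Hess}^g f)\diamond\Phi .
\]
Evaluating on a Cayley plane $P$ at $x$ gives a linear functional in $\operatorname{Hess} f$. By \cref{eq:hlresult} applied infinitesimally, or by direct computation, this functional equals $-\operatorname{tr}_P(\operatorname{Hess} f)$ up to a sign convention; Cayley-positivity therefore gives a definite sign for $\operatorname{tr}_P\operatorname{Hess} f$ for every Cayley plane $P$. Now average over the Grassmannian of Cayley planes at $x$, on which Spin(7) acts transitively. Spin(7) acts irreducibly on $\bR^8$, so the average of the orthogonal projections onto $P$ is $\frac12\mathrm{Id}$. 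Hence the average of $\operatorname{tr}_P\operatorname{Hess} f$ is $\frac12\operatorname{tr}\operatorname{Hess} f=-\frac12\Delta f$, and this has a fixed sign everywhere on $M$. Integrating over compact $M$ gives $\int_M\Delta f=0$. Thus the averaged quantity vanishes identically, and since every summand has the same sign, $\gamma_x(P)=0$ for all Cayley planes $P$ at every $x$. It also follows that $\Delta f=0$, so $f$ is constant and $\gamma=\ddphi f=0$.

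The main obstacle is the sign bookkeeping: checking that $\gamma_x(P)$ equals the trace of $\operatorname{Hess} f$ over $P$ with a consistent sign, and that the average over Cayley planes of $\operatorname{tr}_P$ is proportional to the full trace (so that $\Delta f$ appears). The identity \cref{eq:imddphi1}, which expresses $\pi_1(\ddphi f)=-\frac12(\Delta f)\Phi$, is a useful cross-check here. If the pointwise computation becomes messy, an alternative is to compute $\int_M\gamma\wedge\Phi$. It vanishes because $\gamma$ is exact and $\Phi$ is closed, while $\gamma\wedge\Phi=14\,\pi_1$-coefficient$\cdot\vol$. The $\pi_1$-coefficient is the average of $\gamma$ over Cayley planes, by the first cousin formula, and that average is nonnegative by Cayley-positivity, so it is identically zero and the same conclusion follows.
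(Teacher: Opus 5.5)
Your proof is correct. Its skeleton is the paper's: use \Cref{lem:Spin7delbarlemma} to write $\gamma=\ddphi f$, note that $f$ is $\Phi$-psh, and conclude that $f$ is constant. Your ``first route'' is exactly what the paper does. It cites the Harvey--Lawson fact that $\Phi$-psh functions on a compact Spin(7)-manifold are constant, via \cref{eq:hlresult} and the maximum principle.

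Your preferred second route proves that step instead of citing it. The first-cousin principle gives $(\operatorname{Hess} f\diamond\Phi)(P)=\operatorname{tr}_P\operatorname{Hess} f$ for every Cayley plane $P$. With the paper's conventions the sign is in fact $+$, consistent with \cref{eq:hlresult} since Cayley submanifolds are minimal, but the sign is irrelevant. Averaging over the Cayley Grassmannian then turns Cayley-positivity into $\operatorname{tr}\operatorname{Hess} f=-\Delta f\geq 0$, so $f$ is subharmonic and hence constant. What this buys is self-containedness: ``$\Phi$-psh $\Rightarrow$ subharmonic'' becomes pure linear algebra. It needs no Cayley submanifolds through a given point, which any argument built on \cref{eq:hlresult} requires. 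You can shorten it further. Since $*\Phi=\Phi$, the complement $P^\perp$ of a Cayley plane is again Cayley, so $\gamma_x(P)+\gamma_x(P^\perp)=\operatorname{tr}\operatorname{Hess} f$ with no averaging at all.

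Your last alternative deserves more credit than you give it, but fix its justification. The Cayley average of $\gamma_x(P)$ equals the coefficient $a$ in $\gamma\wedge\Phi=14a\,\vol_{\Phi}$ for every $4$-form. The reason is Spin(7)-invariance of the average and Schur's lemma (only $\Lambda^4_1$ contains invariants), not the first-cousin principle. With that in hand, exactness of $\gamma$ and $d\Phi=0$ give $\int_M a\,\vol_{\Phi}=0$. Cayley-positivity then forces $a\equiv 0$, so $\gamma_x(P)=0$ on all Cayley planes, and \Cref{prop:cayleyplanevanish} gives $\gamma=0$ directly. That version never uses $\gamma=\ddphi f$. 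It therefore bypasses the $\ddphi$-lemma and the Hodge theory of \Cref{thm:Spin7Hodge} entirely, whereas the paper's route and your main route both depend on them.
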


\begin{proof}
Since $\gamma$ is closed and exact, the Spin(7) $\ddphi$-lemma \Cref{lem:Spin7delbarlemma} shows that $\gamma=\ddphi f$ for some $f\in C^{\infty}(M)$. Since $\gamma$ is Cayley-positive, we get that $f$ is $\Phi$-psh and hence $f$ must be constant. This proves that $\gamma=\ddphi f=0$.     
\end{proof}

We now define the analogues of K\"ahler cone for Spin(7)-manifolds\footnote{T. Pacini has informed the authors that analogous notions for $\G2$-manifolds have been studied by him and A. Raffero.}.

\begin{definition}\label{def:spin7cone}
Given a Spin(7)-manifold $(M^8, \Phi)$, define the \emph{Cayley-positive cone} $\cC^{\Phi}(M) \subset H^{\Phi}(M)$ by
\begin{align}
\cC^{\Phi}(M) = \left\{ [\gamma]_{\Phi}\in H^{\Phi}(M)\ :\ \gamma\ \text{has\ a\ closed\ Cayley-positive\ representative}\right\}.
\end{align}
Similarly, the \textbf{strict Cayley-positive cone} $\cK^{\Phi}(M)$ is defined as
\begin{align}
\cK^{\Phi}(M) = \left\{ [\gamma]_{\Phi}\in H^{\Phi}(M)\ :\ \gamma\ \text{has\ a\ closed\ strictly\ Cayley-positive\ representative}\right\}.
\end{align}
\end{definition}
In particular, the space $\cK^{\Phi}(M)$ should be seen as the analogue of the K\"ahler cone. Note that since both $\cC^{\Phi}(M)$ and $\cK^{\Phi}(M)$ are defined using the $\ddphi$-cohomological space $H^{\Phi}(M)$, it's understood that $4$-forms in either of these cones are in $\Omega^4_1(M)\oplus \Omega^4_{35}(M)$. We prove some basic properties of the spaces $\cC^{\Phi}(M)$ and $\cK^{\Phi}(M)$ in the following proposition.

\begin{proposition}\label{prop:conesprop}
Let $(M^8,\Phi)$ be a compact Spin(7)-manifold. Then
\begin{enumerate}[(1)]
    \item \label{part1cones} The space $\cC^{\Phi}(M)$ is a pointed convex cone.
    \item \label{part2cones} The space $\cK^{\Phi}(M)$ is an open convex cone. It is open in the finite-dimensional topology induced by the isomorphism $H^{\Phi}(M)\cong \cH^4_1(M)\oplus \cH^4_{35}(M)$ in \Cref{lem:Spin7coh}.
    \item \label{part3cones} The Spin(7)-structure $\Phi$ satisfies $[\Phi]_{\Phi}\in \cK^{\Phi}(M)$.
\end{enumerate}
\end{proposition}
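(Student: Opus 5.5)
We prove the three parts in the order \ref{part3cones}, \ref{part1cones}, \ref{part2cones}, since positivity of $\Phi$ is the basic input for the openness statement.

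For \ref{part3cones}, note that $\Phi\in\Omega^4_1$ is closed because $d\Phi=0$, so $[\Phi]_\Phi\in H^\Phi(M)$ makes sense. For every oriented Cayley plane $P\subset T_xM$ we have $\Phi_x(P)=1>0$, by the definition of Cayley planes as the planes calibrated by $\Phi$. Hence $\Phi$ is itself a closed, strictly Cayley-positive representative.

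For \ref{part1cones}, the cone and convexity properties come straight from the definition. If $\gamma_1,\gamma_2\in K$ are closed Cayley-positive representatives and $s,t\ge 0$, then $s\gamma_1+t\gamma_2\in K$ is closed. It is also Cayley-positive, since the evaluation $\gamma\mapsto\gamma_x(P)$ is linear and the inequality \cref{eq:cayleypositiveeqn} is preserved under nonnegative combinations. It represents $s[\gamma_1]_\Phi+t[\gamma_2]_\Phi$, because $\Ima(\ddphi)$ is a linear subspace.

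The substantive point is pointedness, $\cC^\Phi(M)\cap(-\cC^\Phi(M))=\{0\}$.
\begin{itemize}
\item Suppose $[\gamma]_\Phi$ and $-[\gamma]_\Phi$ have closed Cayley-positive representatives $\gamma_1$ and $\gamma_2$.
\item Then $\gamma_1+\gamma_2$ is closed, lies in $\Omega^4_{1\oplus35}$, and is Cayley-positive.
\item Its $\ddphi$-class is $0$, so $\gamma_1+\gamma_2=\ddphi f$ is de Rham exact.
\item \Cref{lem:caypos} then gives $\gamma_1+\gamma_2=0$.
\item Hence $\gamma_1=-\gamma_2$ is both Cayley-positive and Cayley-negative, so it vanishes on every Cayley plane.
\end{itemize}
It remains to conclude that $[\gamma_1]_\Phi=0$. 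Here I will use the Harvey--Lawson fact that Cayley planes span $\Lambda^4_{1\oplus7\oplus27}$ and that the $\Omega^4_{35}$ part also pairs nontrivially with them. More concretely, I expect that a $4$-form in $\Omega^4_{1\oplus35}$ vanishing on all Cayley planes must be zero. The $\Lambda^4_{35}$-component is detected by evaluating on Cayley planes, since $A\diamond\Phi$ evaluated on a Cayley plane $P$ equals $\operatorname{tr}(A|_P)$ up to a constant, and together with the $\Lambda^4_1$ part this recovers the symmetric tensor $A$. This linear-algebra step is what I expect to be the main obstacle. I would verify it using the description $\Omega^4_{1\oplus35}=S^2\diamond\Phi$ together with the transitivity of $\mathrm{Spin}(7)$ on Cayley planes.

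A fallback is available if this pointwise argument fails. Pair $\gamma_1$ against Cayley cycles by \cref{eq:hlresult}, or argue via harmonic representatives that the class is zero.

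For \ref{part2cones}, convexity and the cone property are as in \ref{part1cones}, with strict inequalities preserved for $s,t>0$. For openness, let $\gamma$ be a closed strictly Cayley-positive representative. By compactness of $M$ and of the Cayley Grassmannian bundle, there is a $c>0$ with $\gamma_x(P)\ge c$ for all Cayley planes $P$. Now take the harmonic basis $h_1,\dots,h_N$ of $\cH^4_1\oplus\cH^4_{35}$ from \Cref{lem:Spin7coh}. For small $|t|$, the form $\gamma+\sum_i t_ih_i$ is closed, lies in $\Omega^4_{1\oplus35}$, and has $C^0$-norm perturbation at most $C|t|<c$. It therefore remains strictly Cayley-positive. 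Hence a neighbourhood of $[\gamma]_\Phi$ in the finite-dimensional topology lies in $\cK^\Phi(M)$.
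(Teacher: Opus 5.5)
Everything except the last step of pointedness follows the paper's proof. That includes part (3), convexity, the reduction of pointedness via \Cref{lem:caypos} to a closed $\gamma_1\in\Omega^4_{1\oplus35}$ vanishing on every Cayley plane, and the openness argument with small harmonic perturbations. The gap is the final pointwise fact, which you only announce ("I expect"). The justification you offer for it is wrong in several ways.
\begin{itemize}
\item Cayley planes do not span $\Lambda^4_{1\oplus7\oplus27}$. Your formula $(A\diamond\Phi)(P)=\operatorname{tr}(A|_P)$ is correct, since $\Phi$ has no component of type $(3,1)$ relative to $P\oplus P^\perp$ when $P$ is Cayley (first variation of a calibrated plane). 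It gives zero for skew $A$, so all of $\Lambda^4_7=\Lambda^2_7\diamond\Phi$ vanishes on Cayley planes. Their span is therefore $\Lambda^4_{1\oplus27\oplus35}$.
\item If Cayley planes did span the self-dual forms, every anti-self-dual form would vanish on them, which is the opposite of what you need.
\item Transitivity of $\mathrm{Spin}(7)$ on Cayley planes does not by itself make $A\mapsto(P\mapsto\operatorname{tr}(A|_P))$ injective.
\item Your fallback is vacuous. Since $\gamma_1$ vanishes on every Cayley plane, $\int_C\gamma_1=0$ for every Cayley cycle automatically. Moreover \cref{eq:hlresult} concerns $\ddphi f$, not $\gamma_1$.
\end{itemize}

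The missing ingredient is Schur's lemma, as in \Cref{prop:cayleyplanevanish}. The evaluation map $\gamma\mapsto(P\mapsto\gamma(P))$ is $\mathrm{Spin}(7)$-equivariant, so its kernel in $\Lambda^4_1\oplus\Lambda^4_{35}$ is invariant. Since these two summands are non-isomorphic irreducibles, the kernel is a sum of some of them. Neither summand lies in the kernel, because $\Phi_0(P_0)=1$ and $(e^{1234}-e^{5678})(P_0)=1$ for the Cayley plane $P_0=\operatorname{span}\{e_1,\dots,e_4\}$. Hence the kernel is zero.

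In your own notation: $S^2=\bR g\oplus S^2_0$ is a sum of two non-isomorphic irreducibles. The test tensors $g$ and $\mathrm{id}_{P_0}-\mathrm{id}_{P_0^\perp}\in S^2_0$ both have trace $4$ on $P_0$, and indeed $(\mathrm{id}_{P_0}-\mathrm{id}_{P_0^\perp})\diamond\Phi_0=4(e^{1234}-e^{5678})$. With this step supplied, your proof coincides with the paper's.
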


\begin{proof}
The space $\cC^{\Phi}(M)$ is claerly a cone. It is convex because if $s, t\geq 0$ and $\gamma_1, \gamma_2\in \cC^{\Phi}(M)$ then by \cref{eq:cayleypositiveeqn} $s\gamma_1+t\gamma_2$ is Cayley-positive. To prove that it is also pointed, we will need to show that $\cC^{\Phi}(M)\cap -\cC^{\Phi}(M)=\{0\}$. Let $[\gamma]_{\Phi}\in \cC^{\Phi}(M)\cap -\cC^{\Phi}(M)$. By \Cref{def:spin7cone} this means that there are closed Cayley-positive representative $\gamma_+$ and $\gamma_-$ such that $[\gamma_+]_{\Phi}=-[\gamma_-]_{\Phi}$ and since hence $[\gamma_++\gamma_-]_{\Phi}=0$. By the $\ddphi$-cohomology in \Cref{lem:Spin7coh} we get that
\begin{align*}
\gamma_++\gamma_-=\ddphi f,\ f\in C^{\infty}(M).
\end{align*}
We see from \Cref{lem:caypos} that $\gamma_++\gamma_-=0$ and hence $\gamma_-=-\gamma_+$. Since both $\gamma_+$  and $\gamma_-$ are Cayley-positive, this implies that for all $x\in M$ and all Cayley plane $P\subset T_xM$ we have $(\gamma_+)_x(P)\geq 0$ and $(-\gamma_+)_x(P)\geq 0$, thus forcing $(\gamma_+)_x(P)=0$. Thus, the $4-$form $\gamma_+\in \Omega^4_{1\oplus 35}(M)$ vanishes on all Cayley planes at all points and hence from the standard linear algebra of Spin(7)-structures (see \Cref{prop:cayleyplanevanish}) we see that the form $\gamma_+=0$ on $M$. Similarly, $\gamma_-=0$ on $M$. Thus, $\cC^{\Phi}(M)\cap -\cC^{\Phi}(M)=\{0\}$ and $\cC^{\Phi}(M)$ is a pointed convex cone which proves \Cref{part1cones}.

\medskip

\Cref{part2cones} follows from the fact that strict positivity is an open condition and since $M$ is compact, a strictly Cayley-positive representative remains strictly Cayley-positive under small perturbations. Indeed, if $[\gamma]_\Phi\in \cK^\Phi(M)$ we denote a strictly Cayley-positive representative by the same symbol $\gamma$. Since the Cayley Grassmannian bundle over $M$ is compact, there exists $\varepsilon>0$ such that $\gamma(P)\geq\varepsilon$ for every Cayley $4$-plane $P\subset T_xM$ for all $x\in M$. Under the isomorphism $H^\Phi(M)\cong \cH^4_1\oplus\cH^4_{35}$ in \Cref{lem:Spin7coh}, sufficiently small classes have harmonic representatives $h$ with $\|h\|_{C^0}<\frac{\varepsilon}{2}$. Then $\gamma+h$ is still closed, lies in $\Omega^4_1\oplus\Omega^4_{35}$ and strictly Cayley-positive. Hence $[\gamma]_\Phi+[h]_\Phi\in \cK^\Phi(M)$. Therefore $\cK^\Phi(M)$ is open.

\medskip

Finally, since $\Phi$ calibrates Cayley submanifolds, hence for any Cayley plane $P$ we have $\Phi(P)=1$ which implies that $\Phi$ is strictly Cayley-positive. Since $\Phi$ is torsion-free hence $d\Phi=0$ and trivially $\Phi\in \Omega^4_1(M)$ and hence $[\Phi]_{\Phi}\in \cK^{\Phi}(M)$, which proves \Cref{part3cones}.
\end{proof}

%%Linear-algebra fact proof - I am omitting it since it seems standard and maybe in the notes of Salamon--Walpuski and so might be too elementary for the paper. Let's write it here in case someone wants to see the details. 

We now give a proof of the fact that if $\gamma\in \Omega^4_{1\oplus 35}$ vanishes on every Cayley-plane then $\gamma=0$. This was used in the proof of \Cref{prop:conesprop}~\Cref{part1cones}. Although the result is standard and the proof is linear algebraic, we include it here for completeness sake. 

\begin{proposition}\label{prop:cayleyplanevanish}
Let $(V^8,\Phi_0)$ be the standard Spin(7)-vector space isomorphic to $\bR^8$. If
\begin{align*}
 \gamma\in \Lambda^4_1V^*\oplus \Lambda^4_{35}V^*   
\end{align*}
vanishes on every Cayley $4$-plane, then $\gamma=0$.    
\end{proposition}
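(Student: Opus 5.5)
We use Spin(7)-equivariance together with the description of $\Lambda^4_1\oplus\Lambda^4_{35}$ as the image of symmetric endomorphisms under the $\diamond$-map. By \cref{eq:splitting TM* x TM} and the discussion after it, every $\gamma\in\Lambda^4_1V^*\oplus\Lambda^4_{35}V^*$ can be written uniquely as $\gamma=A\diamond\Phi_0$ with $A$ a symmetric endomorphism of $V$. Here the trace part of $A$ gives $\Lambda^4_1$, via $g\diamond\Phi_0=4\Phi_0$, and the traceless part gives $\Lambda^4_{35}$. The plan is to evaluate $A\diamond\Phi_0$ on Cayley planes and show that vanishing on all of them forces $A=0$.

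\emph{Step 1: a formula for $(A\diamond\Phi_0)(P)$.} Take a Cayley plane $P$ with an oriented orthonormal basis $e_1,\dots,e_4$, so $\Phi_0(e_1,e_2,e_3,e_4)=1$. We want
\begin{align*}
(A\diamond\Phi_0)(e_1,e_2,e_3,e_4)=\sum_{i=1}^4 \Phi_0(e_1,\dots,Ae_i,\dots,e_4).
\end{align*}
Decompose $Ae_i$ into its $P$-component and its $P^\perp$-component. The $P$-component contributes $\langle Ae_i,e_i\rangle$, since $\Phi_0|_P=\vol_P$. For the $P^\perp$-component we use that $\Phi_0$ is a calibration and $P$ is calibrated. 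Then $\Phi_0(v,e_j,e_k,e_l)=0$ for all $v\perp P$, because the first variation of $\Phi_0$ on the Cayley Grassmannian vanishes at the maximum (equivalently, $\Lambda^4_7$-type first-order vanishing). Hence
\begin{align*}
(A\diamond\Phi_0)(P)=\operatorname{tr}(A|_P)=\sum_{i=1}^4\langle Ae_i,e_i\rangle .
\end{align*}
This is the Spin(7) analogue of the fact that for a Kähler form and a complex line $L$, $(A\diamond\omega)(L)$ is a partial trace. It matches \cref{eq:hlresult} with $A=\operatorname{Hess} f$.

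\emph{Step 2: reduce to linear algebra.} Suppose $\operatorname{tr}(A|_P)=0$ for every Cayley plane $P$. Pick unit vectors $u\perp v$. Cayley planes through a given pair of orthonormal vectors exist: $u,v$ determine a unique third vector $w=\times(u,v)$ (the triple cross product completion), and for generic choices one gets whole families. A cleaner route is to use the complementary pair. If $P$ is Cayley then so is $P^\perp$, since $*\Phi_0=\Phi_0$. This gives
\begin{align*}
\operatorname{tr}A=\operatorname{tr}(A|_P)+\operatorname{tr}(A|_{P^\perp})=0 .
\end{align*}
For the traceless part, note that the set of Cayley planes is a Spin(7)-orbit $\mathrm{Spin}(7)/K$ with $K=(\mathrm{SU}(2)^3)/\mathbb Z_2$. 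The map $A\mapsto(P\mapsto\operatorname{tr}(A|_P))$ is then a Spin(7)-equivariant linear map from $S^2_0V\cong V_{35}$ to functions on this orbit. By Schur's lemma it is either injective or zero. It is nonzero: take $A$ equal to $+1$ on $P_0$ and $-1$ on $P_0^\perp$ for one Cayley plane $P_0$, which gives $\operatorname{tr}(A|_{P_0})=4\neq0$. Hence it is injective, so $A_0=0$, and therefore $\gamma=0$. The trace part is handled the same way by Schur, or directly since $\Phi_0(P)=1$.

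\emph{Main obstacle.} The main difficulty is Step 1, namely justifying $\Phi_0(v,e_2,e_3,e_4)=0$ for $v\perp P$. I would argue it by calibration first variation. The function
\begin{align*}
t\mapsto\Phi_0(e_1+tv,e_2,e_3,e_4)/|e_1+tv|
\end{align*}
is at most $1$ and equals $1$ at $t=0$, so its derivative there vanishes. That derivative is exactly $\Phi_0(v,e_2,e_3,e_4)$. With this in hand, Step 2 is pure representation theory; the irreducibility of $S^2_0V$ under Spin(7) was already used in \cref{eq:repdecomp}.
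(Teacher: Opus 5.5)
Your proof is correct, and its skeleton is the same as the paper's. The evaluation map on Cayley planes is Spin(7)-equivariant, and its source splits into non-isomorphic irreducibles of dimensions $1$ and $35$, so one nonvanishing test element per summand suffices.

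Your test endomorphism $A=\mathrm{id}_{P_0}\oplus(-\mathrm{id}_{P_0^\perp})$ is in fact sent by $\diamond$ to $4(e^{1234}-e^{5678})$ in an adapted basis. Monomials of type $2{+}2$ get coefficient $0$. Monomials of type $3{+}1$ or $1{+}3$ are absent, by your first-variation fact applied to $P_0$ and to $P_0^\perp$. So your test form is four times the paper's $\gamma_0$. Likewise $g\diamond\Phi_0=4\Phi_0$ plays the role of the paper's $\Phi_0(P)=1$.

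The difference is the route. You pass through the $\diamond$-isomorphism with symmetric endomorphisms and prove the partial-trace formula $(A\diamond\Phi_0)(P)=\operatorname{tr}(A|_P)$ by calibration first variation. You then kill the trace part with the complementary Cayley plane $P^\perp$.

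For this proposition that is extra work the paper avoids. The paper only evaluates an explicit anti-self-dual form on $P_0$, using $\Lambda^4_{35}=\Lambda^4_-$, and never needs the first-variation fact. In exchange, your formula gives a characterization the paper does not state: $A\diamond\Phi_0$ is Cayley-positive iff $\operatorname{tr}(A|_P)\geq 0$ for every Cayley plane $P$. This is the pointwise form of \cref{eq:hlresult}.

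One cleanup: drop the aside that orthonormal $u,v$ determine a unique third vector $\times(u,v)$. In Spin(7) it is three orthonormal vectors that determine the fourth, via the triple cross product, and a given $2$-plane lies in a whole family of Cayley planes. Since you abandon that route, the proof is unaffected.
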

\begin{proof} 
Let $\operatorname{Cay}(V)$ denote the set of oriented Cayley $4$-planes in $V$. Consider the linear map 
\begin{align*}
L:\Lambda^4_1V^*\oplus \Lambda^4_{35}V^*
\longrightarrow C^\infty(\operatorname{Cay}(V)),\ \ L(\gamma).(P)=\gamma(P)     
\end{align*} 
Since the group Spin(7) preserves the set of Cayley planes, the kernel of $L$ is Spin(7)-invariant. From the decomposition of forms,  $\Lambda^4_1$ and $\Lambda^4_{35}$ are irreducible Spin(7)-representations. Hence any Spin(7)-invariant subspace of $\Lambda^4_1\oplus \Lambda^4_{35}$ is a direct sum of some of these irreducible summands. Now $\Lambda^4_1$ is not contained in $\ker L$, since $\Phi_0(P)=1$ for every Cayley plane $P$ by definition. Also $\Lambda^4_{35}$ is not contained in $\ker E$. Indeed, take a standard oriented orthonormal basis $e^1,\ldots,e^8$ such that $P_0=\operatorname{span}\{e_1,e_2,e_3,e_4\}$ is a Cayley plane. Then the $4$-form $\gamma_0=e^{1234}-e^{5678}$ is anti-self-dual, hence lies in $\Lambda^4_-=\Lambda^4_{35}$ by \cref{eq:sdasdforms}. But $\gamma_0(P_0)=1.$ Therefore, $\Lambda^4_{35}$ is not contained in $\ker L$. Thus $\ker L$ contains neither irreducible summand $\Lambda^4_1$ nor $\Lambda^4_{35}$. Hence $\ker L=0$ and consequently $\gamma=0$. 
\end{proof}

A classical result in K\"ahler geometry states that K\"ahler classes are paired to positive complex subvarieties. Complex subvarieties are calibrated objects in K\"ahler geometry and hence we get the following result whose proof is obvious from the definitions and properties of (strict) Cayley-positive forms.

\begin{proposition}
Let $(M^8, \Phi)$ be a compact Spin(7)-manifold and $C^4\subset M^8$ be a compact Cayley submanifold. Then
\begin{align*}
[\gamma]_{\Phi}\in \cC^{\Phi}(M) \implies \int_C \gamma \geq 0.
\end{align*}
If in addition, $C\neq \emptyset$ then
\begin{align*}
 [\gamma]_{\Phi}\in \cK^{\Phi}(M) \implies \int_C \gamma > 0.   
\end{align*}
\qed
\end{proposition}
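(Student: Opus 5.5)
The two claims follow directly from the definitions of the cones and from the calibration inequality, using Stokes' theorem. First, I will check that $P_C$ is well defined on $H^{\Phi}(M)$. This is the preceding corollary: $\int_C \ddphi f = \int_C d(df\lrcorner\Phi)=0$ because $C$ is closed. So $\int_C\gamma$ depends only on $[\gamma]_\Phi$, and we may compute it with whichever closed representative we like.

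For the first implication, let $[\gamma]_\Phi\in\cC^\Phi(M)$ and choose a closed Cayley-positive representative, still denoted $\gamma$. Since $C$ is Cayley, at every $x\in C$ the oriented tangent plane $T_xC$ is a Cayley $4$-plane in $T_xM$ (with the orientation for which $\Phi|_C=\vol_C$). Hence pointwise
\begin{align*}
\gamma|_C = \gamma_x(T_xC)\,\vol_C,\qquad \gamma_x(T_xC)\geq 0,
\end{align*}
where we use that $T_xC$ is spanned by an oriented orthonormal basis and $\gamma_x(P)$ means evaluation on it. Integrating gives $\int_C\gamma=\int_C \gamma_x(T_xC)\vol_C\geq 0$.

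For the strict statement, take a closed strictly Cayley-positive representative. Then $\gamma_x(T_xC)>0$ at every $x\in C$. The function $x\mapsto\gamma_x(T_xC)$ is continuous on the compact manifold $C$, so it is bounded below by some $\varepsilon>0$. Therefore $\int_C\gamma\geq\varepsilon\,\mathrm{vol}(C)$, and this is strictly positive because $C$ is a nonempty $4$-manifold of positive volume.

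The only point needing care is orientations. The identity $\gamma|_C=\gamma(T_xC)\vol_C$ requires $C$ to be oriented so that $\Phi|_C=+\vol_C$, which is the convention built into the word ``Cayley'' above. With the opposite orientation, $T_xC$ would be an anti-Cayley plane and the sign would flip. Apart from this, the argument is routine.
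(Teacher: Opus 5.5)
Your argument is correct and is essentially the argument the paper leaves as obvious. The Cayley period map corollary (Stokes' theorem on the closed $C$) makes $\int_C\gamma$ independent of the representative, and you then integrate the pointwise (strict) positivity of a Cayley-positive representative on the Cayley tangent planes $T_xC$; your orientation remark and your use of compactness and $C\neq\emptyset$ for the strict inequality supply the details the paper omits.
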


The next result describes \textbf{Spin(7)-potentials} which can be seen as an analogue of K\"ahler potentials.

\begin{lemma}\label{lem:spin7potential}
Let $(M^8, \Phi)$ be a compact Spin(7)--manifold. Let $\gamma_0\in \Omega^4_1(M)\oplus \Omega^4_{35}(M)$ be a fixed closed form. Let $\gamma \in \in \Omega^4_1(M)\oplus \Omega^4_{35}(M)$ be another closed form in the same de Rham class as $\gamma_0$. that is, $[\gamma]=[\gamma_0]\in H^4(M, \bR)$. Then there exists $f\in C^{\infty}(M)$ which is unique upto additive constants such that
\begin{align*}
\gamma=\gamma_0+\ddphi f.
\end{align*}
Thus, the Cayley-positive representatives of $[\gamma_0]_{\Phi}$ are parametrized by the space
\begin{align*}
\left\{f\in C^{\infty}(M)\ :\ \gamma_0+\ddphi f\ \text{is\ Cayley-positive} \right\}/\bR.
\end{align*}
\end{lemma}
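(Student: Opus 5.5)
The plan is to reduce existence to the global $\ddphi$-lemma (\Cref{lem:Spin7delbarlemma}) and uniqueness to the maximum principle via \cref{eq:imddphi1}. The parametrization statement will then follow formally.

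For existence, set $\beta=\gamma-\gamma_0$. Since both $\gamma$ and $\gamma_0$ lie in $\Omega^4_1(M)\oplus\Omega^4_{35}(M)$ and this is a linear subspace, $\beta\in\Omega^4_{1\oplus35}(M)$. Both forms are closed, so $d\beta=0$. The hypothesis $[\gamma]=[\gamma_0]$ in $H^4(M,\bR)$ means $\beta$ is de Rham exact. The global $\ddphi$-lemma then gives $f\in C^\infty(M)$ with $\beta=\ddphi f$, that is, $\gamma=\gamma_0+\ddphi f$. Equivalently, one can invoke the corollary on solvability of $\ddphi f=\gamma$ preceding \Cref{def:cayleypositive}, applied to $\beta$.

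For uniqueness up to constants, suppose $\gamma_0+\ddphi f_1=\gamma_0+\ddphi f_2$. Put $u=f_1-f_2$, so that $\ddphi u=0$. Taking the $\Omega^4_1$-component of \cref{eq:imddphi1} gives $-\tfrac12(\Delta u)\Phi=0$, hence $\Delta u=0$ on the compact manifold $M$. By the maximum principle, or by integrating $u\Delta u$ to get $\int_M|du|^2=0$, $u$ is constant. Conversely, adding a constant to $f$ does not change $\ddphi f$ because $\ddphi$ only sees $df$. This is the only place any computation enters, and it has already been done in \cref{eq:imddphi1}.

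Finally, the Cayley-positive representatives of $[\gamma_0]_\Phi$ are by definition the closed forms in $\Omega^4_{1\oplus35}(M)$ in the class of $\gamma_0$ that are Cayley-positive. By existence, each such form is $\gamma_0+\ddphi f$ for some $f$. Conversely, every $\gamma_0+\ddphi f$ is closed, lies in $\Omega^4_{1\oplus35}$ by \cref{eq:imddphi2}, and represents $[\gamma_0]_\Phi$. By uniqueness, the map $f\mapsto\gamma_0+\ddphi f$ descends to a bijection from $C^\infty(M)/\bR$ onto the set of these representatives. Restricting to those $f$ for which $\gamma_0+\ddphi f$ is Cayley-positive gives the stated parametrization.

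I expect no real obstacle. The only subtlety is to note that the $\Omega^4_1$-component of $\ddphi u$ already forces $\Delta u=0$, so the $\Omega^4_{35}$-component need not be analysed separately.
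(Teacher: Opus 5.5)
Your proposal is correct and follows essentially the same route as the paper. Existence comes from the global $\ddphi$-lemma applied to the exact form $\gamma-\gamma_0\in\Omega^4_{1\oplus 35}(M)$. Uniqueness comes from the $\Omega^4_1$-component $-\tfrac12(\Delta u)\Phi$ of $\ddphi u$, which forces $\Delta u=0$ and hence $u$ constant on compact $M$. Your explicit check of the converse direction in the parametrization is a small detail the paper leaves implicit.
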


\begin{proof}
Since $\gamma$ and $\gamma_0$ are closed and $[\gamma]=[\gamma_0]$, the form $\gamma-\gamma_0$ is exact. The $\ddphi$-lemma \Cref{lem:Spin7delbarlemma} implies that there exists $f\in C^{\infty}(M)$ such that
\begin{align*}
\gamma=\gamma_0+\ddphi f,
\end{align*}
which proves the existence part. If there are two functions $f$ and $h$ which satisfy the previous equation then
\begin{align*}
\ddphi f=\ddphi h \implies \ddphi(f-h)=0 \implies \pi^4_{1}(\ddphi(f-h))=0\implies \Delta (f-h)=0,
\end{align*}
and hence by compactness of $M$ we get that $f-h=\text{constant}$. Thus, the potential $f$ is unique up to constants and this also proves the parametrization of the Cayley-positive representatives of $[\gamma_0]_{\Phi}$.
\end{proof}

\subsection{Future questions}\label{subsec:futureques}
We end the paper with some interesting questions. Probably the most natural and important question is to find the analogue of the $\ddphi$-lemma for an $8$-manifold $M^8$ with arbitrary Spin(7)-structure. Note that \cref{eq:hlhessiangeneral} gives an expression of the Harvey--Lawson Hessian operator for an arbitrary Spin(7)-structure. 

\begin{question}
Find an operator $D^{\Phi}_T: C^{\infty}(M)\rightarrow \Omega^4_1(M)\oplus \Omega^4_{35}(M)$ such that
\begin{align*}
(\Omega^4_1(M)\oplus \Omega^4_{35}(M)) \cap \ker d_T = \Ima(D^{\Phi}_T)\oplus \cH^{\Phi}_T,
\end{align*}
for some suitable first-order differential operator $d_T$ and "harmonic sections" $\cH^{\Phi}_T$.
\end{question}
Our \Cref{table:dwithtorsion}, which is true for any Spin(7)-structure, should be the starting point for such considerations.

\begin{question}
Can we say more about the properties of the Cayley-positive cones $\cC^{\Phi}(M)$ and $\cK^{\Phi}(M)$ for manifolds with either torsion-free or arbitrary Spin(7)-structrues? In particular, an analogue of \Cref{lem:spin7potential} can provide ways to formulate the analogue of Calabi-type conjecture in Spin(7)-geometry.
\end{question}

\begin{question}
\emph{$\ddphi$-cohomology and geometric flows of Spin(7)-structures.}
\end{question}
One of the applications of Bott-Chern cohomology in K\"ahler geometry is to define natural locus of K\"ahler forms in which to study various geometric flows of K\"ahler metrics. There have been proposals for deforming an arbitrary Spin(7)-structure to a torsion-free one by means of parabolic partial differential equations. The geometric flows for which we have a well-posed theory on compact manifolds are the isometric/harmonic flow of Spin(7)-structures \cite{dle-isometric}, the negative gradient flow of the $L^2$-norm of the torsion functional \cite{Spin7-flow_Dwivedi} and the Ricci-harmonic flow of Spin(7)-structures \cite[\textsection 7]{dwivedi-rhf}. Thus, it would be interesting to find natural class of admissible $4$-forms using some form of \Cref{lem:Spin7delbarlemma} to study the long-time existence and convergence of the flows within that class. Since there is no analogue of Bryant's Laplacian flow of closed $\G2$-structures in the Spin(7)-case since $d\Phi=0$ implies $\Phi$ is torsion-free, it would be feasible to restrict the flows in \cite{Spin7-flow_Dwivedi} or \cite{dwivedi-rhf} to some sub-class of Spin(7)-structures rather than their current status of being studied on all Spin(7)-structures. This again would require the understanding of \Cref{lem:Spin7delbarlemma}  for arbitrary Spin(7)-structures. 

\printbibliography 

\noindent
	(SD): Fachbereich Mathematik, Universität Hamburg, Bundesstraße 55, 20146 Hamburg, Germany.\\
	\href{mailto:shubham.dwivedi@uni-hamburg.de}{shubham.dwivedi@uni-hamburg.de}\\
	
	\noindent
	(RS): University of Münster, Einsteinstrasse 62, 48149 Münster, Germany. \\
	\href{ rsinghal@uni-muenster.de}{ rsinghal@uni-muenster.de}.

\end{document}